\newtheorem{assumption} {\cal A}
\newtheorem{proposition} {Proposition}
\newtheorem{remark}{Remark}
\newtheorem{corollary}{Corollary}
\begin{document}

\title{A Randomized Proper Orthogonal Decomposition Technique}

\author{
D. Yu,
\thanks{D. Yu is a Graduate Student Researcher, Department of Aerospace Engineering, Texas
  A\&M University, College Station}%
  \and
S. Chakravorty \thanks{S. Chakravorty is an Associate Professor of Aerospace Engineering, Texas A\&M University, College Station}}

\maketitle


\begin{abstract}
In this paper, we consider the problem of model reduction of large scale systems, such as those obtained through the discretization of PDEs. We propose a randomized proper orthogonal decomposition (RPOD) technique to obtain the reduced order models by randomly choosing a subset of the inputs/outputs of the system to construct a suitable small sized Hankel matrix from the full Hankel matrix. It is shown that the RPOD technique is computationally orders of magnitude cheaper when compared to techniques such as the Eigensystem Realization algorithm (ERA)/Balanced POD (BPOD) while obtaining the same information in terms of the number and accuracy of the dominant modes. The method is tested on several different advection-diffusion equations.
\end{abstract}

\section{INTRODUCTION}
In this paper, we consider the problem of model reduction of systems that are governed by partial differential equations (PDE). We propose a randomized version of the snapshot proper orthogonal decomposition (RPOD) technique that allows us to form an ROM of the PDE of interest in terms of the eigenfunctions of the PDE operator by randomly choosing a subset of the full Hankel matrix resulting from an input/output map of the PDE. The RPOD procedure requires orders of magnitude less computation when compared to the BPOD/ Eigensystem Realization Algorithm (ERA) procedure \cite{kung, juang} applied to the full-order Hankel matrix resulting from the discretization of a PDE with a large number of inputs and outputs.  The technique is applied to several different types of advection-diffusion equations to illustrate the procedure.\\

Model reduction has attracted considerable attention in the past several decades. It is a technique that constructs a lower-dimensional subspace to approximate the original higher-dimensional dynamic system. There are several well-known approaches to model reduction. The snapshot proper orthogonal decomposition (POD) technique, followed by a Galerkin projection has been used extensively in the Fluids community to produce reduced order models (ROMs) of fluid physics phenomenon such as turbulence and fluid structure interaction \cite{pod2,pod3,pod4}. A related method based on the balanced truncation technique of  \cite{moore}, and the snapshot POD technique, called the Balanced POD has been used to compute balancing transformations for large-scale systems \cite{willcox, rowley1}. The idea of Balanced POD is that by using  the impulse responses of both the primal and adjoint system, the most controllable and observable modes can be kept in the ROM. In 1978, Kung \cite{kung}  presented a new model reduction algorithm in conjuction with the singular value decomposition technique, and the Eigensystem Realization Algorithm (ERA) \cite{juang1985} was developed based on this technique. The BPOD is equivalent to the ERA procedure \cite{rowley4}, and forms the Hankel matrix using the primal and adjoint system simulations as opposed to the input-output data as in ERA. However, the advantage of the BPOD procedure is that the state of the full order system can be reconstructed from the ROM, as well as any non-zero initial condition projected into the reduced subspace of the ROM, something that is not achievable using the ROM obtained by ERA. More recently, there has been work on obtaining information regarding the dominant modes of a system, based on the snapshot POD followed by a diagonalization of the ROM matrix to extract the modes, called the dynamic mode decomposition (DMD) \cite{schmid, rowley2}.  Our method is a generalization of the DMD procedure, in that we randomly choose a suitable subset Hankel matrix from the full Hankel matrix, and show how adjoint information can be incorporated into the ROM, which leads to computational savings as well as more accurate results. We also provide error bounds on the eigenpairs resulting from the RPOD procedure. \\

The primary drawback of BPOD and ERA is that for a large scale system, such as that obtained by discretizing a PDE,  with a large number of inputs/outputs, the computational burden incurred is very high. There are two main parts to the computation, first is to collect datasets from both primal and adjoint simulation in order to generate the Hankel matrix. The second part is to solve the singular value decomposition problem for the resulting Hankel matrix. Thus, our primary goal in this paper is to reduce the computation required to obtain these ROMs without losing  accuracy. To this end, we introduce a randomized POD (RPOD) method which forms the ROM of a system using its dominant eigenmodes by solving the SVD problem of a suitably randomly chosen sub-matrix of the full Hankel matrix, and subsequently performing a diagonalization of the ROM to obtain the dominant modes. The computations required to form the sub-Hankel matrix, and the subsequent SVD, is computationally orders of magnitude less expensive when compared to the construction/ SVD of the full Hankel matrix, while providing almost the same information as the full Hankel matrix in terms of the numbers and accuracy of the underlying modes.  The RPOD is based on the BPOD and DMD, and retains the most controllable and observable modes in the ROM. We mention here that our ROM technique is SVD based and hence, different from Krylov subspace methods \cite{antoulas}. \\

There has been great interest in the Systems and Control community over the past several years in tractable randomized techniques to solve computationally difficult systems and control design problems \cite{RC1, RC2, RC3, RC4, RC5, RC6}. The RPOD technique can be construed as one such technique for the model reduction of large scale dynamical systems. In particular, it is perhaps most closely related to the ``Scenario Method" for systems and control design \cite{ RC5, RC6}. The scenario method obtains bounds on the number of convex constraints that need to be sampled from an uncountable set of constraints such that the solution to an associated robust control problem can be guaranteed to satisfy an $\epsilon$-fraction of the constraints, with probability greater than $1-\beta$ where $\epsilon, \beta$ are design parameters. In RPOD, we derive a bound for the total number of columns that need to be sampled from a low rank matrix (say rank $l$) containing a large number of columns,  given that the columns are spanned by modes $\{v_1, \cdots, v_l\}$, such that the sampled matrix has the same rank as the large matrix with probability at least $1-\beta$, given that the minimum fraction of the columns in which any of the spanning modes $v_i$ is present is $\bar{\epsilon}$. The scenario method obtains the bound $\frac{2}{\epsilon}(log(\frac{1}{\beta})+d)$ where $d$ is the dimension (size) of the problem, whereas our bound is $\frac{1}{\bar{\epsilon}} log(\frac{l}{\beta})$ where the rank $l$ is the size of our problem. Note the similarity between the two bounds except that our bound has the problem size  $l$ under the logarithm while the scenario method has the problem size $d$ outside the logarithm. The derivation of our bound, albeit different from the bound in \cite{RC5}, is nonetheless inspired by the developments in that reference. \\

We had introduced an iterative POD method (I-POD) in \cite{ACC, JAS} that recursively obtains eigenfunction of a linear operator using the individual input/output trajectories of the system. This paper shows that randomization of the procedure to choose a small subset of the input/output ensemble is sufficient  to extract all the relevant modes while increasing the accuracy and number of the extracted modes. Thus, the RPOD reduces the computation required to obtain the ROMs while at the same time, it increases the accuracy and number of the extracted modes.
The rest of the paper is organized as follows. In Section \ref{section 2}, we briefly introduce the POD and Balanced POD method, then show how to construct the eigenfunctions of the original system based on the snapshot POD technique. The eigenfunction reconstruction method using auto-correlation matrix between the input and output impulse responses respectively, and the cross-correlation matrix between the input and output impulse responses are introuduced as are error bounds on the reconstructed eigenvalues/ eigenvectors. In Section \ref{section 3}, we introduce the randomized proper orthogonal decomposition (RPOD) method where we randomly choose a subset of the inputs/outputs of the system to construct a sub-Hankel matrix when the number of inputs/outputs are large. Then we show that such an approximation contains the same information that is contained in the full Hankel matrix in terms of the dominant modes, given that the number of sampled inputs/ outputs satisfies a certain bound. Further, we compare the computational requirements of the RPOD method with BPOD/ ERA. In Section \ref{section 5}, we provide computational results comparing the RPOD with the BPOD for a 2 dimensional pollutant transport equation, a linearized channel flow problem, and the probability density  evolution in a 2 dimensional damped Duffing oscillator, governed by the Fokker-Planck-Kolmogorov equation.

\section{Eigenfunction reconstruction from Proper Orthogonal Decomposition(POD)}\label{section 2}
Consider a stable linear input-output system
\begin{eqnarray} \label{i/o}
x_k= Ax_{k-1} + Bu_k \nonumber\\
y_k = Cx_k,
\end{eqnarray}
where $x_k \in \Re^N$ is the state variable at discrete time instant $t_k$, $u_k \in \Re^p$ is a vector of inputs, and $y_k \in \Re^q$ is a vector of outputs. Let the input influence matrix be denoted by $B = [b_1,\cdots b_p]$ and the output matrix by $C = [c_1,\cdots c_q]'$.
The dimension of the state $N$ is very large. In the case of a PDE, the above system is obtained via a suitable discretization of the PDE using techniques such as finite Elements (FE)/ Finite Differences (FD).

In this section, first we briefly review the snapshot POD method and the Balanced POD method, then we introduce an eigenfunction reconstruction method based on the snapshot POD method. This method  reconstructs the eigenfunctions of the PDE operator that are present in input/output data, and uses them as a reduced order basis. This is done such that the reduced order basis, unlike in POD/BPOD,  is independent of the data that is used to construct the reduced order model. It also helps us in distingushing underlying invariant modes when we implement the RPOD algorithm, introduced in section \ref{section 3}, in a recursive fashion.

\subsection{Preliminaries}
Consider the linear system (\ref{i/o}), first, we introduced the snapshot POD method proposed by Sirovich in \cite{pod4}.

If we collect the data at timesteps $t_1, t_2, \cdots, t_{M_1}$ during time $0 \leq t \leq T$, and denote the data as $X=[x(t_1), x(t_2), \cdots, x(t_{M_1})]$. Then the POD method seeks to find a projection $P_r$ which can minimize the error
\begin{eqnarray}
\sum_{k=1}^{k=M_1} \|x(t_k)-P_rx(t_k)\|^2.
\end{eqnarray}

To solve this minimization problem, we need to solve the eigenvalue problem:
\begin{eqnarray}
(X^*X)V = \Lambda V,
\end{eqnarray}
where $X^*$ denotes the transpose of $X$, $(\Lambda, V)$ are the non-zero eigenvalues and the corresponding eigenvectors of $X^*X$. Then the POD projection can be constructed as:
\begin{eqnarray}
P_r = X V\Lambda^{-\frac{1}{2}}.
\end{eqnarray}

Thus, the reduced order model constructed using the snapshot POD method is:
\begin{eqnarray}
A_r = P_r'AP_r
\end{eqnarray}

Next, we introduce the Balanced POD method \cite{willcox, rowley1} using the impulse response of the primal and adjoint system.

We collect the impulse response of the primal system by using $b_j$, $j = 1, 2, \cdots, p$, as initial conditions for the simulation of the system, 
\begin{eqnarray} \label{dps}
x_k^{(j)}= Ax_{k-1}^{(j)},
\end{eqnarray}

If we take $M_1$ snapshots across the trajectories at time $t_1, t_2, \cdots, t_{M_1}$, resulting an $N \times pM_1$ matrix
\begin{eqnarray}
X= [X_1, X_2, \cdots, X_p],
\end{eqnarray}
where $X_j = [X_j(t_1), X_j(t_2), \cdots, X_j(t_{M_1})] $ is the state $x$ at time instant $t_1, t_2, \cdots, t_{M_1}$ from the $j^{th}$ input trajectory.

Similarly, we use the transposed rows of the output matrix, $c_i'$, as the initial conditions for the simulations of the adjoint system $A'$, and $M_2$ snapshots are taken across trajectories, leading to the adjoint snapshot ensemble $Y$,
\begin{eqnarray}
Y= [Y_1, Y_2, \cdots, Y_q],
\end{eqnarray}
where $Y$ is an $N \times qM_2$ matrix and $Y_i =[Y_i(\hat{t}_1), Y_i(\hat{t}_2), \cdots, Y_i(\hat{t}_{M_2})]$ is the output $y$ at time instant $\hat{t}_1, \hat{t}_2, \cdots, \hat{t}_{M_2}$ from the $i^{th}$ output trajectory,  $i=1,2, \cdots, q$.

The Hankel matrix $H$ constructed using the input influence matrix at timesteps $(t_1, t_2,\cdots, t_{M_1})$ and the output influence matrix at timesteps $(\hat{t}_1, \hat{t}_2,\cdots, \hat{t}_{M_2})$ is:
\begin{eqnarray}
H= Y'X=  \nonumber \\
\left( \begin{array}{cccc}
CA^{(t_1+\hat{t}_1)}B & CA^{(t_2+\hat{t}_1)}B & \cdots & CA^{(t_{M_1}+\hat{t}_1)}B \\
CA^{(t_1+\hat{t}_2)}B &CA^{(t_2+\hat{t}_2)}B &\cdots& CA^{(t_{M_1}+\hat{t}_2)}B\\
\hdots& \hdots& \cdots & \hdots\\
CA^{(t_1+ \hat{t}_{M_2})}B &CA^{(t_2+\hat{t}_{M_2})}B & \cdots & CA^{(t_{M_1}+\hat{t}_{M_2})}B
\end{array} \right). \label{hankel}
\end{eqnarray}

Then we solve the singular value decomposition (SVD) problem of the matrix $H$:
\begin{eqnarray}
H=Y'X = U \Sigma V'.
\end{eqnarray}

Assume that $\Sigma_1$ consists of  the first $r$ non-zero singular values of $\Sigma$, and $(U_1, V_1)$ are the corresponding left and right singular vectors from $(U,V)$, then the POD projection matrices can be defined as:
\begin{eqnarray}
T_r = XV_1 \Sigma_1^{-\frac{1}{2}}, \nonumber\\
T_l = YU_1 \Sigma_1^{-\frac{1}{2}},
\end{eqnarray}
and the reduced order model constructed using BPOD method is:
\begin{eqnarray}
\begin{cases}
A_r = T_l' A T_r \\
B_r = T_l' B \\
C_r = C T_r
\end{cases}
\end{eqnarray}
\subsection{Eigenfunction reconstruction using auto-correlation matrix}

From the previous section, we can see that the reduced order model constructed using snapshot POD method and BPOD method are not invariant to the datasets $X$ and $Y$. When the collected snapshots $X$ and $Y$ are changed, the POD bases $T_r$ and $T_l$ change too. Thus, we want to construct a global reduced order model which remains invariant to the particular primal and adjoint simulation snapshots $X$ and $Y$. First, we show how to reconstruct the eigenfunctions of the original system based on the POD method, and then we construct the ROM from the extracted eigenfunctions, which by definition is then invariant to the data.

Suppose we use the same impluse response of the primal and adjoint system as above. Following the snapshot POD procedure, we can get the POD basis $T_r$ of the trajectory encoded in the snapshot ensemble $X$ as follows:
\begin{eqnarray} \label{snapshot_POD}
T_r = XV_p\Sigma_p^{-1/2},
\end{eqnarray}
where $\Sigma_p$ are the first $n$ non-zero eigenvalues of the correlation matrix $X'X$, and $V_p$ are the corresponding eigenvectors, i.e.,
\begin{eqnarray}
(X'X)V_p = V_p \Sigma_p.
\end{eqnarray}

Given the snapshot POD eigenfunctions, we can obtain a reduced order approximation of the system in (\ref{dps}) as follows:
\begin{eqnarray}
\psi_k= (T_r'AT_r)\psi_{k-1} \equiv \tilde{A} \psi_{k-1}, 
\end{eqnarray}
where $\psi$ represents the projection of the system state onto the POD eigenfunctions and $\tilde{A}$ represents the reduced order $n  \times n$ system matrix.

Assume that $\tilde{A}$ has a full set of distinct eigenvectors. Let $(\Lambda_r, P_r)$ represent the eigenvalue-eigenvector pair for $\tilde{A}$, i.e.,
\begin{eqnarray}
\tilde{A}P_r = P_r\Lambda_r.
\end{eqnarray}

Noting that $\tilde{A} = P_r\Lambda_r P_r^{-1}$, the ROM matrix $\tilde{A}$ transformed to the co-ordinates specified by $P_r$, can be represented in the modal co-ordinates $\phi$ as:
\begin{eqnarray}
\phi_k= \Lambda_r \phi_{k-1}.
\end{eqnarray}

Thus it follows that
\begin{eqnarray} \label{RI-POD_key}
\Lambda_r = V_r^{-1}A V_r,
\end{eqnarray}
where $V_r=T_rP_r$. Here, $T_r$ is the POD transformation basis and $P_r$ is the ROM eigenfunction matrix. Note that $T_r$ is $N \times n$ and that $P_r$ is $n \times n$, and hence, $V_r$  is $N \times n$. The transformation $V_r$ denotes the composite transformation from the original state space to the POD eigenfunction space, and in turn to the ROM eigenfunction space.

Similarly, we can get the POD basis $T_l$ using the adjoint simulation ensemble $Y$
\begin{eqnarray} \label{ad_snapshot_POD}
T_l = YU_p\hat{\Sigma}_p^{-1/2},
\end{eqnarray}
where $U_p$ and $\hat{\Sigma}_p$ are the eigenvector-eigenvalue pair corresponding to the correlation matrix $Y'Y$. If $(\Lambda_l, P_l)$ represent the eigenvalue-eigenvector pair for reduced order model $\hat{A}= T_l^{-1}A'T_l$. Here, suppose we use the $m^{th}$ order approximation, i.e., $\hat{\Sigma}_p$ are the first $m$ non-zero eigenvalues of $Y'Y$, thus, we have that $V_l = T_lP_l$ is an $N \times m$ matrix. 

In the following, we relate the eigenvalues and right eigenvectors of $A$ to the diagonal form $\Lambda_r$ and the transformation $V_r$.

\begin{assumption} \label{A1}
Assume that there are at most ``$n$" eigenvectors of the matrix $A$ active in the snapshot ensemble $X = [X_1, X_2, \cdots, X_p]$, i.e., 
\begin{eqnarray}
X_i(t_k)= \sum_{j=1}^n \alpha_j^i(t_k) v_j, i=1,2,......p
\end{eqnarray}
where $v_j$ is the eigenvector of $A$. We assume that $n \leq pM_1$, which means that the number of the active modes in the snapshots should be less than or equal to the total number of the snapshots.
\end{assumption}
\begin{remark}
The rank of the snapshots is $min(n, pM_1)$, thus, we require $n \leq min(n,pM_1)$ to make sure that the data is overdetermined in terms of the underlying modes. This assumption can be guaranteed by taking enough snapshots. If different eigenvectors are active in different trajectories, then we take the union of these active eigenvectors and denote the total number of the active eigenvectors by $n$.
\end{remark}

Under Assumption \ref{A1}, the following result is true.

\begin{proposition}\label{P1}
The eigenvalues of the ROM $\tilde{A}$, given by the diagonal matrix $\Lambda_r$, are eigenvalues of the full order model $A$,  and the corresponding right eigenvectors are given by the transformation $T_rP_r$.
\end{proposition}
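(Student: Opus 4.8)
The plan is to exploit the single structural fact supplied by Assumption \ref{A1}: every column of the snapshot ensemble $X$ lies in the span of the $n$ active eigenvectors $\{v_1,\dots,v_n\}$ of $A$. Because this span is an $A$-invariant subspace, I expect the whole argument to reduce to showing that the POD basis $T_r$ spans \emph{exactly} this subspace, so that $\tilde A = T_r'AT_r$ is nothing but the restriction of $A$ to that subspace expressed in the $T_r$ coordinates. First I would record two elementary facts about $T_r = XV_p\Sigma_p^{-1/2}$. Since $V_p$ are orthonormal eigenvectors of $X'X$ with $X'XV_p = V_p\Sigma_p$, a direct computation gives $T_r'T_r = \Sigma_p^{-1/2}V_p'(X'X)V_p\Sigma_p^{-1/2} = I$, so $T_r$ has orthonormal columns; and since the columns of $T_r$ are linear combinations of those of $X$ while $\mathrm{rank}(T_r)=n=\mathrm{rank}(X)$, the range of $T_r$ equals the range of $X$.

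Next I would identify that common range with the eigenvector span. Assumption \ref{A1} gives $\mathrm{range}(X)\subseteq \mathrm{span}\{v_1,\dots,v_n\}$; since $\mathrm{rank}(X)=n$, this forces the $v_j$ to be linearly independent and the inclusion to be an equality, so $\mathrm{range}(T_r)=\mathrm{span}\{v_1,\dots,v_n\}=:\mathcal V$. The point of this identification is that $\mathcal V$ is $A$-invariant, since $Av_j=\lambda_j v_j\in\mathcal V$. Consequently each column of $AT_r$ again lies in $\mathcal V=\mathrm{range}(T_r)$, so there is an $n\times n$ matrix $M$ with $AT_r = T_rM$; left-multiplying by $T_r'$ and using $T_r'T_r=I$ identifies $M = T_r'AT_r = \tilde A$. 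This yields the key invariance relation
\begin{eqnarray}
AT_r = T_r\tilde A.
\end{eqnarray}

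Finally I would feed in the diagonalization of the ROM. Using $\tilde A P_r = P_r\Lambda_r$ together with the relation above,
\begin{eqnarray}
A(T_rP_r) = T_r\tilde A P_r = (T_rP_r)\Lambda_r,
\end{eqnarray}
i.e. $AV_r = V_r\Lambda_r$ with $V_r = T_rP_r$. Reading this column by column, if $\Lambda_r=\mathrm{diag}(\lambda_1,\dots,\lambda_n)$ then the $j$-th column $w_j$ of $V_r$ satisfies $Aw_j=\lambda_j w_j$; since $P_r$ is invertible (the eigenvectors of $\tilde A$ are assumed distinct) and $T_r$ has full column rank, $V_r$ has no zero columns, so each $\lambda_j$ is genuinely an eigenvalue of $A$ with right eigenvector $w_j$, the $j$-th column of $T_rP_r$. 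I expect the only real obstacle to be the invariance step $AT_r=T_r\tilde A$: it is precisely here that Assumption \ref{A1} is indispensable, because $AT_r-T_r\tilde A = (I-T_rT_r')AT_r$ is the component of $AT_r$ orthogonal to $\mathcal V$, and this vanishes exactly when $\mathcal V$ is $A$-invariant; without the guarantee that $\mathrm{range}(T_r)$ carries no contribution outside $\mathcal V$, the reduced eigenvalues would only approximate those of $A$. Everything else is bookkeeping built on $T_r'T_r=I$ and the invertibility of $P_r$.
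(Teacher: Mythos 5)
Your proof is correct, but it takes a genuinely different route from the paper's. The paper works with the explicit factorization $X=V\alpha$, substitutes it into $\tilde A=T_r'AT_r$ to exhibit $\tilde A=\tilde P'(V'V)\tilde\Lambda\tilde P$ with $\tilde P=\alpha V_p\Sigma_p^{-1/2}$, verifies by direct computation (using $\alpha'V'V\alpha=X'X=V_p\Sigma_pV_p'$) that $\tilde P$ and $\tilde P'(V'V)$ are mutually inverse, and then invokes uniqueness of the diagonalization to identify $\Lambda_r=\tilde\Lambda$ and $T_rP_r=V$. You instead avoid the coefficient matrix entirely: you establish $T_r'T_r=I$, identify $\mathrm{range}(T_r)$ with the $A$-invariant span of the active eigenvectors, deduce the intertwining relation $AT_r=T_r\tilde A$, and read off $A(T_rP_r)=(T_rP_r)\Lambda_r$. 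Your argument is shorter and more conceptual, and it makes explicit exactly where Assumption \ref{A1} enters (the vanishing of $(I-T_rT_r')AT_r$); what it does not deliver is the paper's sharper identification $T_rP_r=V$ exactly (you get the eigenvectors only up to the scaling implicit in $P_r$, which suffices for the statement), and it does not set up the $\tilde P\tilde P'(V'V)=I$ machinery that the paper reuses in the perturbation analysis of Propositions \ref{P2} and \ref{P3}. One shared caveat: your step $\mathrm{rank}(X)=n$ is not a consequence of Assumption \ref{A1} alone (which only gives $\mathrm{rank}(X)\le n$); it is the paper's case $r=n$. If $\mathrm{rank}(X)<n$, $\mathrm{range}(T_r)$ is a proper subspace of $\mathrm{span}\{v_1,\dots,v_n\}$ and need not be $A$-invariant, so your invariance step would fail there — but the paper's treatment of the $r<n$ case quietly relies on the same persistency-of-excitation assumption, so this is not a defect relative to the source.
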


\begin{proof}
Recall that $T_r = XV_p\Sigma_p^{-1/2}$. We have
\begin{eqnarray}
X = V\mathbf{\alpha} = [v_1, v_2,\cdots v_n]\begin{bmatrix} \alpha_1^ 1(t_1) & .. & \alpha_1^p(t_{M_1})) \\. & .. & . \\\alpha_n^1(t_1) & .. & \alpha_n^p(t_{M_1}) \end{bmatrix}, \nonumber
\end{eqnarray}
where $V$ denotes the active right eigenvectors of $A$ in the snapshots, and $\alpha$ is the coefficient matrix of the eigenvectors for all the snapshots, note that $V$ is an $N \times n$ and $\mathbf{\alpha}$ is an $n \times pM_1$ matrix. Thus, $X'X \in R^{pM_1 \times pM_1}$, and has $pM_1$ eigenvalues. From Assumption \ref{A1}, the number of acitve modes is $n$, and $n \leq pM_1$. Assume the number of non-zero eigenvalues of $X'X$ is $r$, where $r \leq n$. First, we prove the case when $r=n = pM_1$, which means we keep the $n^{th}$ order approximation. Then, it follows that
\begin{align}
\tilde{A} = T_r' A T_r = \Sigma_p^{-1/2}V_p'\alpha'V'AV\alpha V_p\Sigma_p^{-1/2} \nonumber\\
=  \underbrace{\Sigma_p^{-1/2}V_p'\alpha'V'V }_{\tilde{P}'(V'V)}\tilde{\Lambda}\underbrace{\alpha V_p\Sigma_p^{-1/2}}_{\tilde{P}}
= P_r\Lambda_r P_r^{-1},
\end{align}
where $\tilde{\Lambda}$ are the eigenvalues of $A$ corresponding to the eigenvectors $V$.  Thus, if we show that $\tilde{P}$ is the inverse of $\tilde{P}'(V'V)$, then due to the uniqueness of the similarity transformation of $\tilde{A}$, it follows that $\tilde{P} = P_r^{-1}$ and $\Lambda_r = \tilde{\Lambda}$. To show this, note that:
\begin{align}
\tilde{P}'(V'V)\tilde{P} = \Sigma_p^{-1/2} V_p' \alpha'(V'V) \alpha V_p \Sigma_p^{-1/2}.
\end{align}

Here $\alpha'(V'V) \alpha = X'X = V_p\Sigma_pV_p'$, and therefore, using the orthogonality of the columns of $V_p$, it follows that
\begin{align}
\tilde{P}'(V'V)\tilde{P} = \Sigma_p^{-1/2} V_p' V_p \Sigma_p V_p'V_p \Sigma_p^{-1/2} = I.
\end{align}

Hence, $\tilde{P}$ and $\tilde{P}'(V'V)$ are inverses of each other. It follows that:
\begin{align}
T_r P_r = XV_p\Sigma_p^{-1/2} \Sigma_p^{-1/2}V_p'\alpha'V'V \nonumber \\
= V (\alpha V_p\Sigma_p^{-1/2})( \Sigma_p^{-1/2}V_p'\alpha'V'V) = V \underbrace{\tilde{P} \tilde{P}' (V'V)}_{I}= V
\end{align}

i.e., the columns of $T_rP_r $ are indeed right eigenvectors of A. Moreover, it also follows that owing to the uniqueness of the similarity transformation $\tilde{A}$ that the eigenvalues corresponding to the eigenvectors in $T_rP_r$ are in the diagonal form $\Lambda_r$. Hence, this proves our assertion for the case when we keep the $n^{th}$ order approximation.

If $r < n$, then, the transformation into the POD basis $T_r = XV_p\Sigma_p^{-1/2}$ should only include the POD eigenvectors corresponding to the $r$ non-zero eigenvalues. This implies that $X'X = \alpha'V'V\alpha =  \hat{V}_p \hat{\Sigma}_p\hat{V}_p'$, where $\hat{\Sigma}_p$ contains the $r$ non-zero POD eigenvalues, and $\hat{V}_p$ contains the corresponding eigenvectors. The analysis above goes through unchanged, and hence, $\hat{P}\hat{P}'V'V = I$, and $T_rP_r = V$, where $V$ now consists of the $r$ active eigenvectors.  
\end{proof}

Next, we want to discuss the errors resulting from the fact that Assumption \ref{A1} cannot be exactly satisfied. If we denote $V=[v_1, v_2,\cdots, v_N]$ as the right eigenvectors of system matrix $A$, and
$\alpha = \begin{bmatrix} \alpha_1^ 1(t_1) & .. & \alpha_1^p(t_{M_1}) \\. & .. & . \\\alpha_N^1(t_1) & .. & \alpha_N^p(t_{M_1}) \end{bmatrix}$ as the coefficient matrix, from Assumption \ref{A1}, we need that $\alpha_{n+1}^j,\alpha_{n+2}^j,...,\alpha_N^j=0$, $j = 1, 2, \cdots, p$. However, $\alpha_{n+1}^{j},\alpha_{n+2}^j,...,\alpha_N^j \approx 0$. Thus, we need to characterize the errors from the fact that these coefficients are near zero and not exactly zero. Denote
\begin{eqnarray}
X_{id}= \left(\begin{array}{cc} V_N & V_E\end{array}\right) \left(\begin{array}{c} \alpha_N\\ 0 \end{array}\right)=V_N \alpha_N, \nonumber \\
X_{ac}= \left(\begin{array}{cc} V_N & V_E\end{array}\right) \left(\begin{array}{c} \alpha_N\\ \delta \alpha \end{array}\right)=X_{id}+ V_E \delta \alpha
\end{eqnarray}
Here, $X_{id}$ is the ideal snapshots required in Assumption \ref{A1}, while $X_{ac}$ is the actual set of snapshots, and we assume that $\|\delta \alpha \| \leq C \epsilon$. With this assumption, we have the following result.

\begin{proposition} \label{P2}
Assume that both A and $\tilde{A}$ have distinct set of eigenvalues, $\delta \alpha$ is the coefficient matrix which is defined above, and $\| \delta \alpha \| \leq C \epsilon$, where $C$ is a constant, and $\epsilon$ is sufficiently small.
Then the errors resulting from Assumption \ref{A1} not being exactly satisfied result in the following errors in the reconstruction eigenvalue and eigenvectors:
$\| \Lambda- \hat{\Lambda} \| \leq k_1 \epsilon$, where $\hat{\Lambda}$ is the diagonal matrix contains the actual eigenvalues of the system matrix A which are active in the snapshots $X$, and $\| V_r- \hat{V}  \| \leq k_2 \epsilon$, where $\hat{V}$ is the set of corresponding actual eigenvectors of system matrix A.
\end{proposition}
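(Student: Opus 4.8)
The plan is to treat this as a matrix perturbation problem in which the exact-recovery result of Proposition \ref{P1} plays the role of the unperturbed (ideal) baseline, and to propagate the $O(\epsilon)$ perturbation $X_{ac}-X_{id}=V_E\,\delta\alpha$ through each stage of the eigenfunction-reconstruction pipeline. Writing $\delta X = V_E\,\delta\alpha$, the hypothesis $\|\delta\alpha\|\le C\epsilon$ gives $\|\delta X\|\le \|V_E\|\,C\epsilon = O(\epsilon)$. First I would quantify the perturbation of the correlation matrix via $X_{ac}'X_{ac}-X_{id}'X_{id}=\delta X' X_{id}+X_{id}'\delta X+\delta X'\delta X$, whose norm is $O(\epsilon)$ (the quadratic term being $O(\epsilon^2)$).

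Next I would push this through the symmetric eigenvalue problem $(X'X)V_p=V_p\Sigma_p$. Because the $n$ retained eigenvalues of the ideal correlation matrix $X_{id}'X_{id}$ are bounded away from zero while the spurious eigenvalues introduced by $\delta\alpha$ are $O(\epsilon^2)$, there is an $O(1)$ spectral gap separating the retained modes. Weyl's inequality then bounds the eigenvalue perturbation by $O(\epsilon)$, and a $\sin\theta$ (Davis--Kahan) estimate bounds the perturbation of the retained eigenvectors $V_p$, and hence of $\Sigma_p^{-1/2}$, by $O(\epsilon)$. Since $T_r=XV_p\Sigma_p^{-1/2}$ is a product of factors each perturbed by $O(\epsilon)$ and each bounded in norm, $\|T_r^{ac}-T_r^{id}\|=O(\epsilon)$, and consequently the reduced matrix $\tilde A=T_r'AT_r$ satisfies $\|\tilde A^{ac}-\tilde A^{id}\|=O(\epsilon)$ with $A$ fixed.

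The final stage is the (generally non-normal) eigenproblem $\tilde A P_r=P_r\Lambda_r$, and this is where the main difficulty lies. Unlike the symmetric correlation matrix, $\tilde A=T_r'AT_r$ is not symmetric, so I cannot apply Weyl/Davis--Kahan directly; instead I would invoke the hypothesis that $\tilde A$ (and $A$) have distinct eigenvalues, which makes every eigenvalue simple and the associated spectral projectors $\Pi_i=\frac{1}{2\pi i}\oint (zI-\tilde A)^{-1}\,dz$ analytic, hence locally Lipschitz, in the entries of $\tilde A$. This yields $\|\Lambda_r^{ac}-\Lambda_r^{id}\|=O(\epsilon)$ (Bauer--Fike, with constant the eigenvector condition number $\kappa(P_r)$) and $\|P_r^{ac}-P_r^{id}\|=O(\epsilon)$, the constants blowing up as eigenvalues coalesce, which is exactly why the distinct-eigenvalue assumption is indispensable. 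One subtlety I would address here is that eigenvectors are defined only up to scaling, and a phase for complex modes, so the eigenvector bound presumes a fixed, consistent normalization of the columns of $P_r$ and $\hat V$.

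To close, I would invoke Proposition \ref{P1}: in the ideal problem the reconstructed quantities coincide exactly with the truth, $\Lambda_r^{id}=\hat\Lambda$ and $V_r^{id}=T_r^{id}P_r^{id}=\hat V$. Combining this with the preceding estimates gives $\|\Lambda_r^{ac}-\hat\Lambda\|=\|\Lambda_r^{ac}-\Lambda_r^{id}\|\le k_1\epsilon$, and, using the product rule on $V_r=T_rP_r$, $\|V_r^{ac}-\hat V\|\le \|T_r^{ac}-T_r^{id}\|\,\|P_r^{ac}\|+\|T_r^{id}\|\,\|P_r^{ac}-P_r^{id}\|\le k_2\epsilon$, with $k_1,k_2$ absorbing all the finite constants accumulated along the pipeline. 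The main obstacle, as noted, is the non-normal eigenvector step; the whole argument hinges on the distinct-eigenvalue assumptions supplying the $O(1)$ spectral gaps that keep each Lipschitz constant finite.
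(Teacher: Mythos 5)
Your proposal is correct and follows essentially the same route as the paper: both propagate the $O(\epsilon)$ data perturbation $V_E\,\delta\alpha$ through the correlation matrix, the POD factors $T_r$, and the reduced matrix $\tilde A$, then invoke eigenvalue perturbation theory for the non-normal eigenproblem (the paper cites \cite{eigp} where you name Bauer--Fike and spectral-projector analyticity) and anchor the whole chain on the exactness of the ideal reconstruction from Proposition \ref{P1}. The only presentational difference is that the paper's appendix carries out the explicit algebra for the cross-correlation case and declares the auto-correlation case a special instance, whereas you argue the auto-correlation case directly with generic perturbation lemmas, which suffices here since Proposition \ref{P2} only claims $O(\epsilon)$ throughout.
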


The proof of this proposition uses the eigenvalue perturbation theory \cite{eigp} and the eigenfunction reconstruction technique introduced above. The proof  is shown in Appendix \ref{appendix A}.

\begin{remark}
Since the left eigenvectors of $A$ are found by using the adjoint system $A'$, and the right eigenvectors of $A'$ are the same as the left eigenvectors of $A$,  Proposition \ref{P1} and \ref{P2} hold for the left eigenvectors of $A$ as well.
\end{remark}

We have the right eigenvalue-eigenvector pair $({\Lambda}_r, V_r) $ from the snapshot ensemble $X$, and the left eigenvalue-eigenvector pair $({\Lambda}_l, V_l) $ from the adjoint simulation snapshots $Y$. Among these eigenpairs, we only keep those left/ right eigenvectors that corresponding to the eigenvalues in the intersection of $\Lambda_l$ and $\Lambda_r$.

Then the reduced order model of (\ref{i/o}) is:
\begin{eqnarray}
\psi_{k} = (V_l'AV_r) \psi_{k-1} + V_l'Bu_k, \; \psi_i(0) = (x(0), v_{li}) \nonumber\\
y_k = CV_r \psi_{k-1}
\end{eqnarray}

\begin{remark}
We should note that, theoretically, the transformation $V_r$ and $V_l$ are the right and left eigenvectors of system matrix A, however, practically, $V_r$ and $V_l$ may not be orthogonal to each other, which may cause inaccuracy, and even instablility of the reduced order system, so we need to add a biorthogonalization algorithm. Here, we use a two-sided modified Gram-Schmidt process to re-biorthogonalize the set. The method is shown below:

for $i=1,2,\cdots, j $
\begin{eqnarray}
V_l^{j+1} = V_l^{j+1} - V_l^{j}((V_r^{j} )^{H}V_l^{j+1}) \nonumber \\
V_r^{j+1}= V_r^{j+1} - V_r^{j}((V_l^{j})^{H} V_r^{j+1}) 
\end{eqnarray}

end for
\end{remark}
\subsection{Eigenfunction reconstruction using cross-correlation matrix}
In practice, the results using the cross-correlation between the output trajectories Y and input trajectories X are better than the method outlined using the auto-correlation matrix. By using the cross-correlation matrix, biorthogonality of the bases $T_r$ and $T_l$ is guaranteed, and we can also save computations needed to match the left and right eigenvector pairs. Further, the eigenpairs reconstruction is much more accurate. This method is used in all the computational results reported in this paper. 

 We form the right POD basis $T_r = XV_p \Sigma_p^{-1/2}$, and the left POD basis $T_l = YU_p \Sigma_p^{-1/2}$, which are the same as the auto-correlation case, but here $(U_p, \Sigma_p, V_p)$ is the solution of the singular value decomposition problem: 
\begin{eqnarray}
H=Y'X = U_p\Sigma_p V_p'.
\end{eqnarray}

We have to solve the eigenvalue problem of $\tilde{A}$:
\begin{eqnarray}
\tilde{A} = (\Sigma_p^{-1/2} U_p'Y')A(XV_p \Sigma_p^{-1/2}) = P\Lambda_{ij} P^{-1} .
\end{eqnarray}

The reduced order model is:
\begin{eqnarray}
\begin{cases}
A_r = \Lambda_{ij} = \underbrace{(P^{-1}\Sigma_p^{-1/2}U_p'Y')}_{\Phi_{ij}'}A\underbrace{(X V_p \Sigma_p^{-1/2} P)}_{\Psi_{ij}}\\
B_r = \Phi_{ij}'B\\
C_r = C \Psi_{ij}
\end{cases}\label{RPODROM}
\end{eqnarray}

Notice that the active left eigenvectors in the snapshots $Y$ and the active right eigenvectors in the snapshots $X$ may not be the same. We assume that the contribution of the left and right eigenvectors corresponding to the different eigenvalues are small. Thus, we denote $X_{ac} = V_S \alpha_S + V_D \delta \alpha_D$, $Y_{ac} = U_S \beta_S + U_D \delta \beta_D$, where $(U_S, V_S)$ are the active left and right eigenvectors corresponding to the same eigenvalues $\Lambda_S$, $(U_D, V_D)$ are the rest of the left and right eigenvectors.    
We assume that $\| \delta \alpha_D \| \leq C_1 \epsilon$, where $C_1$ is some constant, and $\epsilon$ is sufficiently small. Similarly, $\| \delta \beta_D \| \leq C_2 \epsilon$. The following result then holds.

\begin{proposition} \label{P3}
Denote $(\Lambda_S, U_S, V_S)$ as the actual eigenvalues, left and right eigenvectors of $A$ which are active in both sets of snapshots $X$ and $Y$.
Under the assumption that $\| \delta \alpha_D \| \leq C_1 \epsilon$,  $\| \delta \beta_D \| \leq C_2 \epsilon$, for sufficiently small $\epsilon$,
the errors in eigenvalue and eigenvector reconstruction using the cross-correlation matrix are 
$\| \Lambda_{ij}- \Lambda_S\| \leq k_1 \epsilon^2$, $\| \Phi_{ij}- U_S\| \leq k_2 \epsilon$, and $\| \Psi_{ij}- V_S\| \leq k_3 \epsilon$, i.e., $\Lambda_{ij}$, $\Phi_{ij}$ and $\Psi_{ij}$ are arbitrarily good approximation of the eigenvalues, left and right eigenvectors active in both sets of snapshots $X$ and $Y$, given that $\epsilon$ is sufficiently small.
\end{proposition}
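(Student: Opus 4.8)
The plan is to prove Proposition \ref{P3} in the same two-stage spirit as Propositions \ref{P1} and \ref{P2}: first treat the \emph{ideal} case $\delta\alpha_D=0$, $\delta\beta_D=0$, in which the construction recovers $(\Lambda_S,U_S,V_S)$ exactly, and then treat the near-zero leakage coefficients as small perturbations. Throughout I normalize the true eigenvectors biorthonormally, so that $U_S'V_S=I$, and I use the standard biorthogonality of left/right eigenvectors of a matrix with distinct eigenvalues, namely $U_S'V_D=0$ and $U_D'V_S=0$. Before either stage, I would record two identities valid for \emph{every} dataset (ideal or perturbed), coming purely from the SVD normalization in (\ref{RPODROM}). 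Writing $\Phi_{ij}'=P^{-1}\Sigma_p^{-1/2}U_p'Y'$ and $\Psi_{ij}=XV_p\Sigma_p^{-1/2}P$, substitution of $H=Y'X=U_p\Sigma_p V_p'$ gives the exact biorthonormality $\Phi_{ij}'\Psi_{ij}=P^{-1}\Sigma_p^{-1/2}U_p'(U_p\Sigma_p V_p')V_p\Sigma_p^{-1/2}P=I$; and since $\Lambda_{ij}=P^{-1}\tilde{A}P=\Phi_{ij}'A\Psi_{ij}$ is by construction the diagonalization of $\tilde A$, the matrix $\Phi_{ij}'A\Psi_{ij}$ is \emph{exactly diagonal}. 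These two structural facts are what ultimately produce the $\epsilon^2$ rate, and they cost nothing.

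For the ideal case I would substitute $X=V_S\alpha_S$, $Y=U_S\beta_S$ and $AV_S=V_S\Lambda_S$, obtaining $H=\beta_S'(U_S'V_S)\alpha_S=\beta_S'\alpha_S$ and $\tilde A=\Sigma_p^{-1/2}U_p'\beta_S'\Lambda_S\alpha_S V_p\Sigma_p^{-1/2}$. Setting $R=\alpha_S V_p\Sigma_p^{-1/2}$ and $L=\Sigma_p^{-1/2}U_p'\beta_S'$, the SVD normalization forces $LR=I$, whence $\tilde A=R^{-1}\Lambda_S R$. By uniqueness of the diagonalization (distinct eigenvalues), up to the column scaling fixed by the construction, this gives $\Lambda_{ij}=\Lambda_S$, $P=R^{-1}$, and therefore $\Psi_{ij}=V_S R P=V_S$ and (using $RL=I$) $\Phi_{ij}=U_S$ exactly. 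This is the cross-correlation analogue of Proposition \ref{P1}.

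I would then turn on the perturbations via $X_{ac}=V_S\alpha_S+V_D\delta\alpha_D$ and $Y_{ac}=U_S\beta_S+U_D\delta\beta_D$. Biorthogonality annihilates every $S$--$D$ cross term, so both $H_{ac}$ and $Y_{ac}'AX_{ac}$ differ from their ideal values only through the surviving $D$--$D$ term, which carries two leakage factors and is therefore $O(\epsilon^2)$. Invoking standard SVD perturbation (using a gap between the retained singular values and $\Sigma_p$ bounded away from singularity) together with eigenvector perturbation of $\tilde A$ (using the assumed distinct eigenvalues), I conclude that $U_p,V_p,\Sigma_p,P$ all lie within $O(\epsilon^2)$ of their ideal values. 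Feeding this back into the explicit formulas yields $\Psi_{ij}=X_{ac}V_p\Sigma_p^{-1/2}P=V_S+V_D\,\delta\alpha_D V_p\Sigma_p^{-1/2}P+O(\epsilon^2)$, so the first-order error is the \emph{direct} leakage term $V_D\delta\alpha_D(\cdots)=O(\epsilon)$, giving $\|\Psi_{ij}-V_S\|\le k_3\epsilon$, and symmetrically $\|\Phi_{ij}-U_S\|\le k_2\epsilon$.

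Finally, the eigenvalue bound follows from a two-sided Rayleigh-quotient cancellation. Writing $\Psi_{ij}=V_S+\delta V$, $\Phi_{ij}=U_S+\delta U$ with $\delta U,\delta V=O(\epsilon)$ and expanding $\Lambda_{ij}=\Phi_{ij}'A\Psi_{ij}$ with $AV_S=V_S\Lambda_S$, $U_S'A=\Lambda_S U_S'$, $U_S'V_S=I$, gives $\Lambda_{ij}=\Lambda_S+\Lambda_S(U_S'\delta V)+(\delta U'V_S)\Lambda_S+O(\epsilon^2)$. The exact biorthonormality forces $U_S'\delta V+\delta U'V_S=-\delta U'\delta V=O(\epsilon^2)$; restricting to the diagonal (legitimate because $\Lambda_{ij}$ is exactly diagonal) cancels the bracketed first-order terms, leaving $(\Lambda_{ij})_{ii}=\lambda_i+O(\epsilon^2)$, i.e. $\|\Lambda_{ij}-\Lambda_S\|\le k_1\epsilon^2$. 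I expect the main obstacle to be the bookkeeping of the third step: propagating the $O(\epsilon^2)$ change of $H_{ac}$ through the SVD and then through the diagonalization of $\tilde A$ requires quantitative gap conditions and control of $\Sigma_p^{-1/2}$, and one must carefully separate the genuinely $O(\epsilon)$ direct-leakage contribution to the eigenvectors from the $O(\epsilon^2)$ contributions routed through the SVD. The concluding cancellation is short, but it rests entirely on the two exact structural identities established at the outset, so stating and exploiting them cleanly is essential.
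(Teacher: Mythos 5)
Your proposal is correct and follows essentially the same route as the paper's proof in Appendix A: the same decomposition $X = V_S\alpha_S + V_D\,\delta\alpha_D$, $Y = U_S\beta_S + U_D\,\delta\beta_D$, the same use of left/right biorthogonality to reduce the perturbations of $Y'X$ and $Y'AX$ to the $O(\epsilon^2)$ terms $\delta\beta_D'\,\delta\alpha_D$ and $\delta\beta_D'\Lambda_D\delta\alpha_D$, the same appeal to SVD and eigenvalue perturbation theory for $U_p,\Sigma_p,V_p,P$, and the same identification of the direct leakage term $V_D\,\delta\alpha_D\,\hat{V}_p\hat{\Sigma}_p^{-1}\hat{U}_p'\beta_S'$ as the $O(\epsilon)$ contribution to $\Psi_{ij}-V_S$ (with the exact identity $P\hat{P}=I$ playing the role of your $LR=I$). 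The one step you handle differently is the eigenvalue bound: the paper obtains $\|\Lambda_{ij}-\Lambda_S\|=O(\epsilon^2)$ directly, because $\tilde{A}=T_lAT_r$ is itself only an $O(\epsilon^2)$ perturbation of $\hat{A}=P\Lambda_SP^{-1}$ --- the first-order leakage never enters $\tilde{A}$, only the reconstructed eigenvectors --- whereas you recover it a posteriori from the $O(\epsilon)$ eigenvector errors via the exact identities $\Phi_{ij}'\Psi_{ij}=I$ and the diagonality of $\Lambda_{ij}$, cancelling the first-order Rayleigh-quotient terms on the diagonal. Both arguments are sound; the paper's is shorter because the eigenvalue claim never needs the eigenvector estimates, while yours makes explicit why $O(\epsilon)$ eigenvector errors do not degrade the eigenvalues, at the cost of the extra bookkeeping you yourself flag in the third step.
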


The proof is shown in Appendix \ref{appendix A}. We can see that $\Lambda_{ij}$ contains the most observable and controllable eigenmodes present in the adjoint/ primal response data Y and X. Also note that the eigenvalues extracted using the cross-correlation matrix are much more accurate than using the auto-correlation matrix ($O(\epsilon^2)$ vs $O(\epsilon)$). Further, the left/ right eigenvectors constructed are orthogonal by construction.
\section{Randomized Proper Orthogonal Decomposition Method}\label{section 3}
From Section \ref{section 2}, we see that we can construct POD bases, and extract the underlying eigenvectors of the original system, which are invariant to the particular primal and adjoint datasets $X$ and $Y$. Assume that the rank of the full Hankel matrix $H = Y'X$ is $l$. Since the dimension of the systems governed by PDEs may be very large due to the discretization, the computation to construct the Hankel matrix and solve the SVD problem is very expensive, especially when there are a large number of inputs/ outputs. The eigenfunction reconstruction technique from Section \ref{section 2} suggests that if we can construct a sub-Hankel matrix $\hat{H}$ which is still rank $l$, then the underlying $l$ eigenmodes can be recovered from the sub-Hankel matrix. Thus, we introduce a randomized proper orthogonal decomposition(RPOD) method based on the eigenfunction reconstruction technique which randomly chooses a small subset of the inputs/ outputs, and constructs a sub-Hankel matrix from the full Hankel matrix such that the information encoded in the sub-Hankel matrix is almost the same as that in the full Hankel matrix, in terms of the number and accuracy of the underlying modes that can be extracted. In Section \ref{err}, we show how to randomly choose the inputs/ outputs from the original system, and show that the RPOD method extracts exactly the same information, in terms of the dominant modes, from a much smaller sub-Hankel matrix as can be extracted from the full Hankel matrix. In Section \ref{section 4}, we compare the computational requirements of RPOD and BPOD.

\subsection{The RPOD Technique}\label{err}
\begin{algorithm}[!tb] \label{RPOD_algo}
\begin{enumerate}
\item{For $i=1 : r$, for $j=1 : s$}
\item{Pick $c_i \in \{1, \cdots, p\}$ with probability $P[c_i = k] = \frac{1}{p}, k=1, \cdots, p$}
\item{Pick $r_j \in \{1, \cdots, q\}$ with probability $P[r_j = k] = \frac{1}{q}, k = 1, \cdots, q$}
\item{Set $\hat{B}^{(i)} = B^{(c_i)}$, $\hat{C}_{(j)} = C_{(r_j)}$}
\item{Using $\hat{B}^{(i)}, i= 1, \cdots, r$ as the initial conditions for the primal simulation, collect the snapshots at $t=\tilde{t}_1, \cdots, \tilde{t}_{m_1}$, denoted as $\hat{X}$}
\item{Using $(\hat{C}_{(j)})^T, j = 1, \cdots, s$ as the initial conditions for the adjoint simulation, collect the snapshots at $t = \tilde{t}_1, \cdots, \tilde{t}_{m_2}$, denoted as $\hat{Y}$}
\item{Construct the reduced order Hankel matrix $\hat{H} = \hat{Y}^T \hat{X}$}
\item{Solve the SVD problem of $\hat{H} = U_p \Sigma_p V_p$}
\item{Construct the POD basis: $T_r = \hat{X}V_p \Sigma_p^{-1/2}$, $T_l = \hat{Y}^TU_p^T \Sigma_p^{-1/2}$}
\item{Construct the matrix: $\tilde{A} = T_l^T A T_r$, and $(\Lambda, P)$ are the eigenvalues and eigenvectors of $\tilde{A}$}
\item{Construct new POD basis: $\Phi'= P^{-1}T_l'$ and $\Psi = T_r P$}
\item{The ROM is: $A_r= \Phi' A \Psi$, $B_r = \Phi' B$, $C_r = C \Psi$}
\end{enumerate}
\caption{RPOD Algorithm}
\end{algorithm}

Consider the stable linear system (\ref{i/o}), we randomly choose $r$ columns from $B$ according to the uniform distribution, denoted as $\hat{B}$, and randomly choose $s$ rows from $C$ with uniform distribution, denoted as $\hat{C}$. Denote $(.)^{(i)}$ as the column of $(.)$, and $(.)_{(i)}$ as the rows of $(.)$, then the RPOD procedure is summarized in Algorithm \ref{RPOD_algo}.

Define matrix $p_1$, $p_2$ such that 
\begin{eqnarray}
\hat{B} = Bp_1, \nonumber\\
\hat{C} = p_2C,
\end{eqnarray}
where $p_1 \in R^{p \times r}$, $p_1^{(i,j)} =1, i= 1, 2,...,p, j = 1, 2, ..., r$ if the $i^{th}$ column of $B$ is chosen, and $p_1^{(i,j)} = 0$ otherwise. Similarily, $p_2 \in R^{s \times q}$, where $p_2^{(i,j)} = 1, i= 1, 2,..., s, j=1, 2, ..., q$ if the $j^{th}$ row of $C$ is chosen, and $p_2^{(i,j)} = 0$ otherwise. 
The original Hankel matrix $H$ constructed using the input influence matrix at timesteps $(t_1, t_2, \cdots, t_{M_1})$ and the output influence matrix at timesteps $(\hat{t}_1, \hat{t}_2, \cdots, \hat{t}_{M_2})$ was previously defined in (\ref{hankel}). The reduced order Hankel matrix $\hat{H}$ is then constructed using $\hat{B}$, $\hat{C}$ at timesteps $(\tilde{t}_1, \tilde{t}_2, \cdots, \tilde{t}_{m_1})$ and timesteps $(\tilde{t}_1, \tilde{t}_2, \cdots, \tilde{t}_{m_2})$ respectively. Here, $(\tilde{t}_1, \tilde{t}_2, \cdots, \tilde{t}_{m_1})$ are randomly chosen from the timesteps $(t_1, t_2, \cdots, t_{M_1})$ with uniform distribution, and $(\tilde{t}_1, \tilde{t}_2, \cdots, \tilde{t}_{m_2})$ are randomly chosen from the timesteps $(\hat{t}_1, \hat{t}_2, \cdots, \hat{t}_{M_2})$ with uniform distribution.  Thus, the RPOD technique can be seen as randomly choosing $rm_1$ columns from the $H$ matrix to form the $\tilde{H}$ matrix, and then randomly choosing $sm_2$ rows from the $\tilde{H}$ matrix to form $\hat{H}$. Alternatively, it essentially is equivalent to choosing a suitable random subset of the columns of the primal/ adding responses, namely $\hat{X}$ and $\hat{Y}$ to generate the sub-Hankel matrix $\hat{H} = \hat{Y}'\hat{X}$.\\

First, we provide a general result regarding randomly choosing a rank $``l"$ sub-matrix from a large rank $``l"$ matrix. Suppose $X \in R^{P \times Q}$ is a rank $l$ matrix, and suppose that $X$ is spanned by the vectors $\{v_1, v_2, \cdots v_l\}$, $l\ll P, Q$. Let $X^{(i)}$ denote the set of columns of $X$ that contain the vector $v_i$.
Let
\begin{align}
\epsilon_i = \frac{n(X^{(i)})}{N},
\end{align}
denote the fraction of the columns in $X$ in which vector $v_i$ is present. Further let 
\begin{align}
\bar{\epsilon} = \min_i \epsilon_i,
\end{align} 
and note that $\bar{\epsilon} > 0$. 
\begin{proposition} \label{Mbound}
Let $M$ columns be sampled uniformly from among the columns of the matrix $X$, and denote the sampled sub-matrix by $\hat{X}$.  Let $(\Omega, \mathcal{F}, P)$ denote the underlying probability space for the experiment.  Given any $\beta > 0$, if the number $M$ is chosen such that
\begin{align}
M > max(l, \frac{1}{\bar{\epsilon}} log(\frac{l}{\beta})),
\end{align}
then $P(\rho(\hat{X}) < l) < \beta$, where $\rho(\hat{X})$ denotes the rank of the sampled matrix $\hat{X}$.
\end{proposition}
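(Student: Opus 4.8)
The plan is to reduce the rank condition on the sampled matrix to a covering condition on the spanning modes, and then to control the latter by a union bound over the $l$ modes. First I would write $X = V\alpha$, where $V = [v_1, \cdots, v_l]$ collects the $l$ spanning vectors (which are linearly independent, since $X$ has rank $l$ and is spanned by them) and $\alpha \in R^{l \times Q}$ is the coefficient matrix, exactly as in the proof of Proposition \ref{P1}. Because $V$ has full column rank, for any selection of columns we have $\rho(\hat{X}) = \rho(\hat{\alpha})$, where $\hat{\alpha}$ is the $l \times M$ submatrix of $\alpha$ obtained by keeping the sampled columns. Thus $\rho(\hat{X}) = l$ if and only if the sampled columns of $\alpha$ span all of $R^l$, and in particular this forces $M \geq l$, which is exactly the first term inside the $\max$. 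Note that the $i$-th row of $\alpha$ is supported precisely on the columns belonging to $X^{(i)}$, so a necessary condition for full rank is that the sample meet $X^{(i)}$ for every $i$.

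For the probabilistic estimate, I would define the bad event $B_i = \{$ none of the $M$ sampled columns lies in $X^{(i)} \}$, i.e. mode $v_i$ is entirely absent from the sample. Since the columns are drawn i.i.d. uniformly (steps 2--3 of Algorithm \ref{RPOD_algo}), each individual draw avoids $X^{(i)}$ with probability $1-\epsilon_i$, and hence $P(B_i) = (1-\epsilon_i)^M \leq (1-\bar{\epsilon})^M \leq e^{-\bar{\epsilon} M}$. Granting that $\{\rho(\hat{X}) < l\} \subseteq \bigcup_{i=1}^l B_i$, a union bound gives
\begin{align}
P(\rho(\hat{X}) < l) \leq \sum_{i=1}^l P(B_i) \leq l\, e^{-\bar{\epsilon} M}. \nonumber
\end{align}
Requiring the right-hand side to be below $\beta$ is equivalent to $\bar{\epsilon} M > \log(l/\beta)$, i.e. $M > \frac{1}{\bar{\epsilon}} \log(\frac{l}{\beta})$; combined with the rank constraint $M \geq l$ noted above, this yields precisely the stated threshold $M > \max(l, \frac{1}{\bar{\epsilon}} \log(\frac{l}{\beta}))$.

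The hard part will be justifying the containment $\{\rho(\hat{X}) < l\} \subseteq \bigcup_i B_i$, i.e. that meeting every $X^{(i)}$ (together with $M \geq l$) is not merely necessary but sufficient for $\hat{\alpha}$ to have full row rank. This fails for an arbitrary coefficient matrix: a single column can carry nonzero coefficients on several modes, so every mode can be ``present'' in the sample while the sampled columns still lie in a proper subspace (for instance, repeated copies of one generic combination). The containment therefore needs a genericity hypothesis on $\alpha$, which is supplied by the structure of the application: the snapshots are time samples of the form $A^{t_k} b$, so the coefficient of $v_i$ in the $k$-th snapshot is $\lambda_i^{t_k}$ times an initial weight, and under the distinct-eigenvalue assumption this Vandermonde structure guarantees that any $l$ sampled columns collectively exciting all $l$ modes are linearly independent. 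I would make this hypothesis explicit (it is already implicit in Assumption \ref{A1} and the distinct-eigenvalue requirement used in Proposition \ref{P1}) and verify it before invoking the union bound; everything else reduces to the routine estimate above.
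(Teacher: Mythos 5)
Your proposal follows the paper's proof almost line for line: the same bad events $B_i$ (all $M$ draws land in the complement $\tilde{X}^{(i)}$), the same union bound $P(\rho(\hat{X})<l)\leq \sum_i (1-\epsilon_i)^M \leq l(1-\bar{\epsilon})^M$, and the same algebra to extract the threshold, with $M\geq l$ appended as the trivial rank constraint. The one place you depart is the most valuable part of your write-up: the containment $\{\rho(\hat{X})<l\}\subseteq \bigcup_i B_i$ that you flag as ``the hard part'' is exactly the step the paper asserts without proof --- it declares $B=\bigcup_i B_i$, justifying the direction actually needed only by the sentence that a rank-deficient sample must have all its columns drawn from some $\tilde{X}^{(i)}$. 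Your skepticism is warranted: for an arbitrary rank-$l$ matrix spanned by $\{v_1,\dots,v_l\}$ that claim is false. Take $l=2$ with columns $v_1$, $v_2$, and many copies of $v_1+v_2$; a sample consisting only of copies of $v_1+v_2$ meets both $X^{(1)}$ and $X^{(2)}$, so no $B_i$ occurs, yet the sample has rank $1$. Only the reverse inclusion $\bigcup_i B_i \subseteq B$ holds unconditionally.

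So the proposition, as stated, needs a hypothesis on the coefficient matrix $\alpha$ beyond ``$v_i$ appears in a fraction $\epsilon_i$ of the columns,'' namely: any selection of $M\geq l$ columns that meets every $X^{(i)}$ already spans all $l$ modes. Your proposed repair via the Vandermonde structure of the snapshots $A^{t_k}b_j$ is the right instinct, but as stated it is not yet airtight either: ``any $l$ sampled columns collectively exciting all $l$ modes are linearly independent'' can fail, e.g., when two input trajectories produce proportional coefficient vectors at the sampled instants, or for generalized Vandermonde matrices built from complex eigenvalues at non-consecutive sampling times. If you make the genericity condition on $\alpha$ an explicit hypothesis (and verify it for the snapshot ensembles you actually use), your union bound closes the proof; without it, your argument and the paper's share the same hole --- the difference being that you noticed it.
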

\begin{proof}
Let $\hat{X} (\omega) = \{X_{1}(\omega), \cdots X_{M}(\omega)\}$ denote a random M-choice from the columns of $X$. If the ensemble $\hat{X}$ has rank less than $l$ then note that atleast one of the vectors $v_i$ has to be absent from the ensemble. Define the events
\begin{align}
B = \{\omega \in \Omega : \rho(\hat{X}(\omega)) < l\}, \, \mbox{and}\\
B_i = \{\omega \in \Omega:{X_{k}}(\omega) \in \tilde{X}^{(i)}, \forall k \},
\end{align}
where $\tilde{X}^{(i)}$ denotes the complement set of columns in $X$ to the set $X^{(i)}$. Due to the fact that the ensemble $\hat{X}$ is rank deficient if all of the columns of $\hat{X}$ are sampled from atleast one of the sets $\tilde{X}^{(i)}$, and the fact that if $\hat{X}$ is rank deficient, all the columns of $\hat{X}$ have to be sampled from at least one of the sets $\tilde{X}_i$, it follows that:
\begin{align}
B = \bigcup_i B_i.
\end{align}
However, $P(B_i) \leq (1-\epsilon_i)^M$. Thus, it follows that
\begin{align} 
P(B) \leq \sum_{i=1}^l P(B_i) = \sum_{i=1}^l (1-\epsilon_i)^M, \nonumber\\
\leq l(1-\bar{\epsilon})^M. \label{bound_key}
\end{align}
Hence, it follows that $P(\rho(\hat{X}) < l) \leq l(1 -\bar{\epsilon})^M$. If we require this probability to be less than some given $\beta >0$, then, it can be shown by taking log on both sides sides of the above expression that $M$ should satisfy
\begin{align} \label{Meff}
M > \frac{1}{\bar{\epsilon}} log(\frac{l}{\beta}).
\end{align}
Noting that $\hat{X}$ is rank deficient unless $M \geq l$, the result follows.\\
\end{proof}
\begin{remark}
 \textit{Effect of $l, \bar{\epsilon}$ on the bound M:} 
 It can be seen that the number of choices $M$ is influenced primarily by $\bar{\epsilon}$ and not significantly by the number of active modes/ rank of the ensemble $l$, since $l$ appears in the bound under the logarithm. Thus, the difficulty of choosing a sub-ensemble that is rank $l$ is essentially decided by the fraction $\bar{\epsilon_i}$ of the ensemble in which the rarest vector $v_i$ is present. Moreover, note that as the number $l$ increases, we need only sample $\mathcal{O}(l)$ columns to have a rank $``l"$ sub-ensemble.\\
\end{remark}
\begin{remark}
\textit{Effect of Sampling non-uniformly:} 
In certain instances, for instance, when we have a priori knowledge, we may choose to sample the columns of $X$ non-uniformly. Define 
\begin{align}
\epsilon_i^{\Pi} = \sum_{j=1}^N 1_i(X_j)\pi_j,
\end{align}
where $\pi_j$ is the probability of sampling column $X_j$ from the ensemble $X$, and $ 1_i(X_j)$ represents the indicator function for vector $v_i$ in column  $X_j$, i.e, it is one if $v_i$ is present in $X_j$ and 0 otherwise. Note that $\epsilon_i$ as defined before is the above quantity with the uniform sampling distribution $\pi_j= \frac{1}{N}$ for all $j$. It is reasonably straightforward to show that Proposition \ref{Mbound}  holds with $\bar{\epsilon}^{\Pi} = \min_i \bar{\epsilon_i}^{\Pi}$ for any sampling distribution $\Pi$ ( we replace $\epsilon_i$ in Eq. \ref{bound_key} with $\bar{\epsilon}_i^{\Pi}$) . The effect of a good sampling distribution is to lower the bound $M$ by raising the number $\bar{\epsilon}^{\Pi}$ over that of a uniform distribution. This may be an intelligent option when otherwise the bound on $M$ with uniform sampling can be very high, for instance when one of the vectors $v_i$ is present in only a very small fraction of the ensemble $X$. However, we might have some a priori information regarding the columns where $v_i$ may be present and thus, bias the sampling towards that sub-ensemble. \\
\end{remark}

Next, it can be seen how the RPOD procedure extends the above result to the Balanced POD scenario where we consider the Hankel matrix $H = Y'X$, where $H$ is of size $qM_2 \times pM_1$. Suppose again that the output and input ensembles $Y$ and $X$ are spanned by the same set of left/ right eigenvectors $U = \{u_1, \cdots u_l\}$ and $V = \{v_1, \cdots v_l\}$  respectively, corresponding to the same set of eigenvalues $\Lambda = \{\lambda_1, \cdots \lambda_l\}$.  Thus, the Hankel matrix $H$ is rank $l$. 
Define:
\begin{align} \label{eps_def}
\bar{\epsilon}_X = \min_i \epsilon_{X,i},\nonumber\\
\bar{\epsilon}_Y = \min_j \epsilon_{Y,j},
\end{align} 
where $\epsilon_{X,i}$ is the fraction of columns in $X$ in which the right eigenvector $v_i$ is present, and $\epsilon_{Y,j}$ is the fraction of the columns in $Y$ in which the left eigenvector $u_j$ is present.\\
The RPOD  chooses a small number of inputs/ outputs, namely $s$/$r$ respectively, and then chooses a small number  of times, $m_1$ for the input and $m_2$ from the outputs, at which to sample the input/ output trajectories, and form the sub-Hankel matrix  $\hat{H}$ which is much smaller in size, $sm_2 \times rm_1$ when compared to the original Hankel matrix $H$.  This is equivalent to a uniform sampling of the columns of the input and output ensembles $X$ and $Y$ respectively to form $\hat{H} = \hat{Y}'\hat{X}$. Note that due to Proposition \ref{Mbound},  given any $\beta>0$, if we choose the number of inputs/ outputs $r/s$, and the timesteps at which to sample these input/ output trajectories $m_1/ m_2$, in such a way that $rm_1$ and $sm_2$  satisfy the bounds:
\begin{align} \label{RPOD_bound}
rm_1> max(l, \frac{1}{\bar{\epsilon}_X}log(\frac{l}{\beta})),  \nonumber\\
sm_2 > max(l, \frac{1}{\bar{\epsilon}_Y}log(\frac{l}{\beta})), 
\end{align}
then the probability of $\hat{H}$ having rank less than $l$ is less than $\gamma = 1-(1-\beta)^2$, since then the probability that the ranks of the sampled input and output ensembles are less than $l$, is less than $\beta$. Thus, if we repeatedly choose $K$ such ensembles with replacement, the probability of having a sub-Hankel matrix $\hat{H}$ that is still less than rank $l$ after the $K$ picks, has to be less than $\gamma^K$. Thus, the probability of choosing a rank $l$ sub-Hankel matrix $\hat{H}$ exponentially approaches unity with the number of trials. Again, noting that the value of $\beta$ does not have a significant influence on the bounds above, it follows that $\beta$ can be chosen to  be quite high without significantly affecting the number of columns that need to be chosen to satisfy the confidence level of $\beta$, and thus, the probability of choosing a rank $l$ sub-Hankel matrix can be made arbitrarily high by judiciously choosing the number of columns in the input/ output ensembles according to the bounds in Eq. \ref{RPOD_bound}. We summarize the development above in the following proposition.\\

\begin{proposition}\label{RPOD_prop}
Let $H = Y'X$ be a $qM_2 \times pM_1$ Hankel matrix with $p$ inputs, $q$ outputs, $M_1$ time snapshots in every input trajectory and $M_2$ time snapshots for every output trajectory. Let the left/ right eigenvectors $ U=\{u_1, \cdots u_l\}$, and $V = \{v_1, \cdots v_l\}$ denote the eigenvectors spanning the input and output ensembles  $X$ and $Y$ respectively. Let $\bar{\epsilon}_X, \bar{\epsilon}_Y$ be as defined in Eq. \ref{eps_def}. Let $\beta> 0$ be given. Suppose we construct a sub-Hankel matrix $\hat{H}$ according to the RPOD procedure:  by uniformly sampling $r$ inputs with $m_1$ time snapshots, and $s$ outputs with $m_2$ snapshots, and that $rm_1$ and $sm_2$ are chosen as in Eq. \ref{RPOD_bound}, then the probability that the sub-Hankel matrix has rank less than $l$ is less than $\gamma = 1-(1-\beta)^2$. Moreover, the probability that after $K$ RPOD choices, with replacement, the probability that the sub-Hankel matrix is less than rank $l$ is less than $\gamma^K$.\\
\end{proposition}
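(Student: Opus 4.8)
The plan is to recognize Proposition \ref{RPOD_prop} as a two-sided application of Proposition \ref{Mbound}: one copy applied to the right-eigenvector ensemble $X$ and one to the left-eigenvector ensemble $Y$, followed by a reduction of the rank of $\hat{H} = \hat{Y}'\hat{X}$ to the joint rank of the two sampled ensembles. First I would record the factorizations already used in the proof of Proposition \ref{P1}, namely $X = V\alpha$ and $Y = U\beta$, where $V = [v_1, \cdots, v_l]$, $U = [u_1, \cdots, u_l]$, and $\alpha, \beta$ are the $l \times pM_1$ and $l \times qM_2$ coefficient matrices; since $H = Y'X$ is rank $l$, both $\alpha$ and $\beta$ must have full row rank $l$. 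As argued in the paragraph preceding the proposition, the RPOD two-stage draw (choosing $r$ inputs and $m_1$ times, $s$ outputs and $m_2$ times) is equivalent to uniformly sampling $rm_1$ columns of $X$ and $sm_2$ columns of $Y$, which in the factored form amounts to selecting the corresponding columns of $\alpha$ and $\beta$, yielding $\hat{X} = V\hat{\alpha}$ and $\hat{Y} = U\hat{\beta}$.

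The central step, and the one requiring the most care, is to show that the sampled Hankel matrix inherits rank $l$ whenever both sampled ensembles do. Writing
\begin{align}
\hat{H} = \hat{Y}'\hat{X} = \hat{\beta}'(U'V)\hat{\alpha}, \nonumber
\end{align}
I would invoke the biorthogonality of the left and right eigenvectors of $A$: because $U$ and $V$ correspond to the same set of distinct eigenvalues $\Lambda = \{\lambda_1, \cdots, \lambda_l\}$, the matrix $U'V$ is diagonal with nonzero diagonal entries and hence nonsingular. Then $\hat{\beta}'$ has full column rank and $(U'V)\hat{\alpha}$ has full row rank whenever $\hat{X}$ and $\hat{Y}$ are each rank $l$, so by Sylvester's rank inequality $\rho(\hat{H}) = l$. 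Equivalently, $\hat{H}$ is rank deficient only if at least one of $\hat{X}, \hat{Y}$ is rank deficient. This is the main obstacle, since it is here that the structure distinguishing the two-sided Hankel setting from the single-ensemble result of Proposition \ref{Mbound} enters; the remainder is bookkeeping on the failure probabilities.

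It then remains to apply Proposition \ref{Mbound} twice and combine. Taking $M = rm_1$ columns from $X$, the first bound in Eq. \ref{RPOD_bound} gives $P(\rho(\hat{X}) < l) < \beta$, and taking $M = sm_2$ columns from $Y$ the second bound gives $P(\rho(\hat{Y}) < l) < \beta$. Since the primal and adjoint draws are made independently, the probability that both sampled ensembles are full rank is at least $(1-\beta)^2$, whence by the implication established above
\begin{align}
P(\rho(\hat{H}) < l) \leq P(\rho(\hat{X}) < l \,\mbox{ or }\, \rho(\hat{Y}) < l) \leq 1 - (1-\beta)^2 = \gamma. \nonumber
\end{align}
Finally, for $K$ RPOD picks carried out independently with replacement, the events that the $k$-th sampled Hankel matrix is rank deficient each have probability at most $\gamma$, so the probability that all $K$ of them are rank deficient is at most $\gamma^K$, which establishes the exponential approach to certainty claimed in the proposition.
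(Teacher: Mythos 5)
Your proposal is correct and follows essentially the same route as the paper, which simply applies Proposition \ref{Mbound} once to the columns of $X$ and once to the columns of $Y$, combines the two failure probabilities via independence into $\gamma = 1-(1-\beta)^2$, and iterates over $K$ independent picks to get $\gamma^K$. The one place you go beyond the paper is in explicitly justifying that rank-$l$ sampled ensembles force $\rho(\hat{H}) = l$ via the factorization $\hat{H} = \hat{\beta}'(U'V)\hat{\alpha}$ and the biorthogonality (hence nonsingularity) of $U'V$; the paper asserts this implication without argument, so your added step is a welcome tightening rather than a departure.
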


The following corollary immediately follows due to the developments in section II.
\begin{corollary}
Let $(\Lambda, U, V)$ be the eigenvalues, left and right eigenvectors underlying the data in the full Hankel matrix. Given any $\beta >0$, and that a sub-Hankel matrix $\hat{H}$ is chosen as in Proposition \ref{RPOD_prop}, the same $(\Lambda, U, V)$ triple can be extracted from the sub-Hankel matrix $\hat{H}$ with probability at least $(1-\beta)^2$, and hence, with probability $(1-\beta)^2$, the information contained in $H$ and $\hat{H}$ is identical in terms of the $(\Lambda, U, V)$ triple.\\
\end{corollary}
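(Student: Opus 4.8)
The plan is to treat this corollary as a clean composition of two results already established: the combinatorial rank guarantee of Proposition \ref{RPOD_prop} and the deterministic eigenfunction-reconstruction results of Section \ref{section 2}. The idea is that recovery of $(\Lambda, U, V)$ is a \emph{certain} consequence of $\hat{H}$ attaining full rank $l$, so the only randomness in the statement is the rank event, and its probability $(1-\beta)^2$ is simply inherited from Proposition \ref{RPOD_prop}. Thus I would first isolate the event on which reconstruction succeeds, then invoke the Section \ref{section 2} propositions on that event, and finally read off the probability.

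The one substantive step is to show that $\rho(\hat{H}) = l$ is equivalent to \emph{all} $l$ modes being active in \emph{both} subsampled ensembles simultaneously. Writing $\hat{X} = V\hat{\alpha}$ and $\hat{Y} = U\hat{\beta}$, where $V, U$ are the $N \times l$ matrices of right/left eigenvectors and $\hat{\alpha}, \hat{\beta}$ are the coefficient matrices of the retained columns, I would factor $\hat{H} = \hat{Y}'\hat{X} = \hat{\beta}'(U'V)\hat{\alpha}$. Since $U$ and $V$ are the left and right eigenvectors of the same diagonalizable $A$ with distinct eigenvalues, $U'V$ is a diagonal, nonsingular $l \times l$ matrix. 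Because $\hat{\beta}'$ has at most $l$ columns and $\hat{\alpha}$ has at most $l$ rows, $\rho(\hat{H}) \leq \min(\rho(\hat{\alpha}), \rho(\hat{\beta})) \leq l$, with equality $\rho(\hat{H}) = l$ forcing $\rho(\hat{\alpha}) = \rho(\hat{\beta}) = l$; conversely, if both coefficient matrices have rank $l$ then multiplication by the full-column-rank $\hat{\beta}'$ and the nonsingular $U'V$ preserves rank, so $\rho(\hat{H}) = l$. Hence $\{\rho(\hat{H}) = l\} = \{\rho(\hat{X}) = l\} \cap \{\rho(\hat{Y}) = l\}$, which is exactly the condition (Assumption \ref{A1} together with its adjoint counterpart) under which the reconstruction propositions were proved.

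On this full-rank event I would then apply Proposition \ref{P1} in the auto-correlation case, or more sharply Proposition \ref{P3} in the cross-correlation case, \emph{verbatim} to the sub-ensembles $\hat{X}, \hat{Y}$ and the sub-Hankel matrix $\hat{H}$. Those propositions are stated for any ensemble containing the full set of active modes, so nothing in their proofs changes when the ensemble happens to be a random subsample; they yield $T_r P = V$ and $T_l P^{-1} = U$ (up to the biorthogonalization bookkeeping of Section \ref{section 2}) together with $\Lambda_{ij} = \Lambda$, i.e. exact recovery of the triple active in both $X$ and $Y$. (When Assumption \ref{A1} holds only approximately, the quantitative versions of Propositions \ref{P2} and \ref{P3} replace this with $O(\epsilon)$/$O(\epsilon^2)$ errors; the ``identical information'' phrasing of the corollary is the exact-recovery case.) Assembling the probability is immediate: Proposition \ref{RPOD_prop} gives $P(\rho(\hat{H}) = l) \geq (1-\beta)^2$ under the sampling bounds \ref{RPOD_bound}, and since recovery is certain on that event, $(\Lambda, U, V)$ is extracted with probability at least $(1-\beta)^2$.

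The main obstacle is precisely the equivalence in the second paragraph, namely that full rank of the \emph{product} $\hat{H}$ forces full mode coverage in \emph{both} factors at once, which rests entirely on the nonsingularity of $U'V$ for a diagonalizable $A$ with distinct eigenvalues; once that is in place, the corollary is a direct citation of Propositions \ref{RPOD_prop} and \ref{P3}, and the $K$-trial amplification carries over unchanged from Proposition \ref{RPOD_prop}.
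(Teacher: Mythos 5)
Your proposal is correct and follows the same route the paper intends: the paper offers no explicit proof, asserting the corollary ``immediately follows'' from Proposition \ref{RPOD_prop} together with the reconstruction results of Section \ref{section 2}, which is exactly the composition you carry out. Your second paragraph --- the factorization $\hat{H} = \hat{\beta}'(U'V)\hat{\alpha}$ with $U'V$ diagonal and nonsingular, establishing $\{\rho(\hat{H}) = l\} = \{\rho(\hat{X}) = l\} \cap \{\rho(\hat{Y}) = l\}$ --- is a useful explicit justification of a step the paper leaves implicit, but it does not change the argument's structure.
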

\begin{remark}
Several remarks are made below about the above results.
\begin{enumerate}
\item The fractions $\bar{\epsilon}_X$ and $\bar{\epsilon}_Y$  are metrics of the ``difficulty" of the problem. For instance, if all the relevant modes were controllable/observable from every input/output, then these fractions are unity, and any RPOD choice would have rank $l$. The lower these fractions are, the higher the number of rows and columns $sm_2$ and $rm_1$ need to be chosen such that Proposition \ref{RPOD_prop} holds for the sampled sub-Hankel matrix. This corresponds to a mode, or set of modes, being controllable/ observable only from a very sparse set of actuator/ sensor locations respectively. 
\item We do not know $\bar{\epsilon}_X,\bar{\epsilon}_Y$ a priori, and thus, we cannot directly apply Proposition \ref{RPOD_prop}. In practice, we repeatedly sample sub-Hankel matrices, and check the underlying eigenmodes from each choice. If the underlying modes from different choices are identical, then we can give a guarantee that the Hankel matrix is actually rank $l$, given a difficulty level $\bar{\epsilon}$. Thus, we are able to quantify the confidence in our ROMs for different values of the difficulty level $\bar{\epsilon}$.  Typically, we have seen that if the number of rows/ columns sampled are large enough, we are able to extract all the relevant modes.
\item We can also vary the size of the sampled sub-Hankel matrices which in turn raises the probability of sampling a random choice with rank equal to that of the full Hankel matrix. 
\item If we have a priori knowledge of the system, we can sample the sub-Hankel matrix using some sampling distribution other than the uniform distribution function, which as mentioned previously, has the effect of raising the fractions $\bar{\epsilon}_{X},\bar{\epsilon}_Y$, and thus, lower the required size of the sub-Hankel matrix.
\item In reality, the Hankel matrix is not exactly rank $l$ but approximately rank $l$. In such a case, we can appeal to Proposition \ref{P3} to show that the errors incurred due to this fact is small if the contribution from the modes other than the dominant $l$ modes are small.
\end{enumerate}
\end{remark}
\subsection{COMPARISON WITH BALANCED POD} \label{section 4}
We assume that the system has $p$ inputs and $q$ outputs, and suppose that we have $M_1$ snapshots for each input trajectory, and $M_2$ snapshots for each output trajectory.

For Balanced POD, we need to solve the SVD problem of the full Hankel matrix (\ref{hankel}). Here, $H$ is a $qM_2 \times pM_1$ matrix , so Balanced POD has to solve a $(qM_2) \times (pM_1)$ SVD problem which takes time$\varpropto o(max(q^3M_2^3,p^3M_1^3))$. 

Compared with BPOD, the computations required to construct the Hankel matrix as well as that required to solve the SVD problem can be saved by using RPOD. 

We randomly pick $m_1$, $m_2$ snapshots from primal simulation and adjoint simulation respectively, and we randomly choose a set of inputs/outputs matrix $\hat{B}/  \hat{C}$ to construct the reduced order Hankel matrix. Thus, we need to store a $rm_2 \times sm_1$ matrix instead of a $qM_2 \times pM_1$ matrix. From Table \ref{table:compute}, we can see that the computation time using RPOD is $\frac{rs}{pq}\frac{m_1m_2}{M_1M_2}$ time that of using BPOD, for construction of the Hankel matrix, and the computation time for solving the SVD problem using RPOD is $\frac{max(r^3m_2^3,s^3m_1^3)}{max(q^3M_2^3,p^3M_1^3)}$ of using BPOD. 

In Table \ref{table:compute}, we show the comparison of the computational requirements of RPOD and BPOD.  Note that the order of the system N can be very large for most realistic problems. 

\begin{table}[htbp]
\caption{Computational analysis for RPOD and BPOD}
\centering
\begin{tabular}{c|c|c}
\hline
& Compute Markov parameters & SVD size \\
\hline
BPOD& $(M_1+M_2)pqN^2$&$(qM_2 \times pM_1)$ \\
\hline
RPOD &$(m_1+m_2) rsN^2$&$(rm_2 \times sm_1)$\\
\hline
\end{tabular}
\label{table:compute}
\end{table}


\section{COMPUTATIONAL RESULTS}\label{section 5}
In the following, we will show the comparison of RPOD with Balanced POD for three examples: a pollutant transport equation, the linearized channel flow problem and probability density function evolution in a 2-D damped duffing oscillator.

We define the output error as:
\begin{eqnarray}
E_{output} =\frac{ \|Y_{true} - Y_{red} \|}{\|Y_{true}\|},
\end{eqnarray}
where $Y_{true}$ are the outputs of the true system and $Y_{red}$ are the outputs of the reduced order system. 

The state error is defined as:
\begin{eqnarray}
E_{state}= \frac{\| X_{true} - X_{red} \|}{\|X_{true}\|},
\end{eqnarray}
where $X_{true}$ is the state of the true system and $X_{red}$ is the state of the reduced order system. 

\subsection{Pollutant transport equation}\label{pollutant}
The two-dimensional advection-diffusion equation describing the contaminant transport is:
\begin{eqnarray}
\frac{\partial c(x,y,t)}{\partial t} = D_x \frac{\partial^2 c(x,t)}{\partial x^2}+ D_y \frac{\partial^2 c(x,t)}{\partial y^2}-\nonumber \\
v_x \frac{\partial c(x,t)}{\partial x}
- v_y \frac{\partial c(x,t)}{\partial y}+ S_s,
\end{eqnarray}

where $c$ is concentration of the contaminant, $D$ is dispersion and takes value 0.6 here, $v$ is velocity in the $x$ and $y$ directions, and takes value 1, and $S_s$ is source of pollutant. 
In simulation, there are three obstacles and three sources in the field. The initial condition for the simulation is zero. We use Neumann boundary conditions. The model is simulated for a period of $10$ minutes, and the field is a square with the length of each edge $5m$. The field is discretized into a $ 50 * 50 $ grid. The actual field is shown in Fig.\ref{rand}.

\begin{figure}[!htbp]
\centering
\includegraphics[width=2.5in]{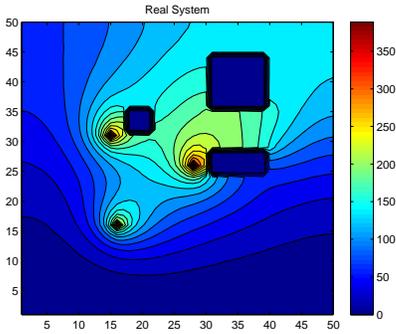}
\caption{Actual field at the end of simulation for pollutant transport equation}
\label{rand}
\end{figure}
The size of system matrix is $2500 \times 2500$, here, we take the impluse response of the system. For BPOD, we use the full state measurements. Since there are only 3 sources in the field, thus, we need to take enough snapshots for the primal simulation to make sure the number of active modes should be less than or equal to the number of input trajectories. Here we use 500 snapshots from $t \in [0min, 10min]$ for the primal simulation. Similarly, Assumption \ref{A1} is guaranteed by using the full state measurements for the adjoint simulation, thus, we use 3 snapshots from $t \in [0min, 1min]$ for the adjoint simulation, which will not result in a large SVD problem. Notice that taking the snapshots earlier will allow us to extract more modes before they die out. Therefore, for BPOD, we need to solve a $7500 \times 1500$ SVD problem. For RPOD, we randomly choose 500 measurements, and take 300 snapshots from $t \in [0min, 10min]$ for the primal simulation, and 3 snapshots from $t \in [0min, 1min]$ for the adjoint simulation. Thus, we only need to solve a $1500 \times 900$ SVD problem for RPOD. We extract 80 modes using both methods, and construct the ROM using these modes. In Fig.\ref{fig:pollerrors}(a), we show the comparison of the first twenty eigenvalues extracted by two methods.

\begin{figure}[!htbp]
\centering

\subfigure[Comparison of eigenvalues extract by RPOD and BPOD for pollutant transport equation]{
\includegraphics[width=2.4in]{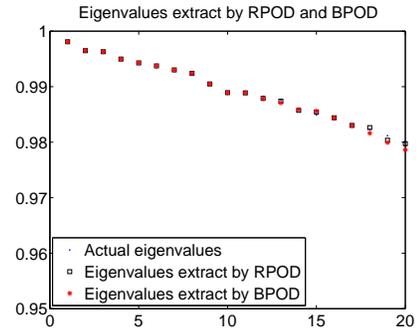}}
\subfigure[Comparison of output errors]{\includegraphics[width=2.5in]{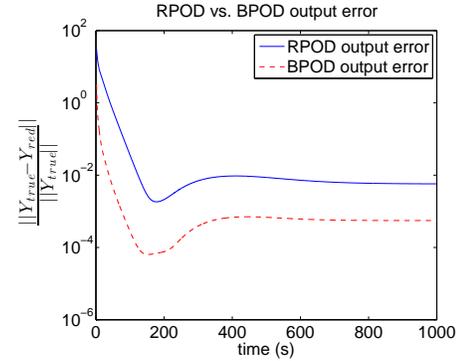} }
\caption{Comparison between RPOD and BPOD for Pollutant Transport Equation}
\label{fig:pollerrors}
\end{figure}

To test the ROM, we take the average output/state error over the 3 different impluse responses. The state errors and the output errors are the same because we take the full state measurements, thus, we show the comparison of the output errors in Fig. \ref{fig:pollerrors}(b). We can see that BPOD is more accurate than RPOD, but both the errors are less than $1\%$, however, there is significant computational savings in using the RPOD over the BPOD in solving the SVD problem.

\subsection{ Linearized Channel Flow problem}\label{channel}
Consider the problem of the fluid flow in a plane channel. We focus on the linearized case when there are small perturbations about a steady laminar flow. The flow is perturbed by body force $B(y,z) f(t)$, which means the force is acting in the wall-normal direction. There is no-slip boundary condition at the walls $y= \pm 1$ and the flow is assumed to be periodic in the $x$ and $z$ direction. Assume there is no variations in the $x$ direction, then the linearized equation of the wall-normal velocity $v$ and the wall-normal vorticity $\eta$ are given by:
\begin{eqnarray}
\frac{\partial v}{\partial t} = \frac{1}{R}  \nabla^2 v + B f,\nonumber \\
\frac{\partial \eta}{\partial t} = \frac{1}{R} \nabla^2 \eta - U' \frac{\partial v}{\partial z},
\end{eqnarray}
where $R = 100 $ is the Reynolds number and $U(y)= 1-y^2$ is the steady state velocity. The domain $z \in [0, 2\pi]$. We discretize the system using the finite difference method, where both the $y$ direction and $z$ direction are discretized into 21 nodes. Thus, the size of the system is $882 \times 882$.
There are 2 constant body forces on $y = 0$, and the measurements are taken on all the nodes on boundaries. For BPOD, we use 80 measurements on the boundaries, and take 1000 snapshots from $t \in [0,1000s]$ for the primal simulation, 50 snapshots from $t \in [0, 500s]$ for the adjoint simulation, which leads to a $8000 \times 2000$ SVD problem. For RPOD, we randomly choose 50 measurements from the 80 measurements on the boundaries, and take 200 snapshots from $t \in [0, 200s]$ for the primal simulation, and take 20 snapshots from $t \in [0, 200s]$ for the adjoint simulation. Thus, we need to solve a $2000 \times 400$ SVD problem for RPOD. The actual velocity and vorticity at $t=1000s$ are shown in Fig. \ref{cfield}.

\begin{figure}[!htbp]
\centering
\subfigure[Actual velocity at t=1000s]{\includegraphics[width=2.4in]{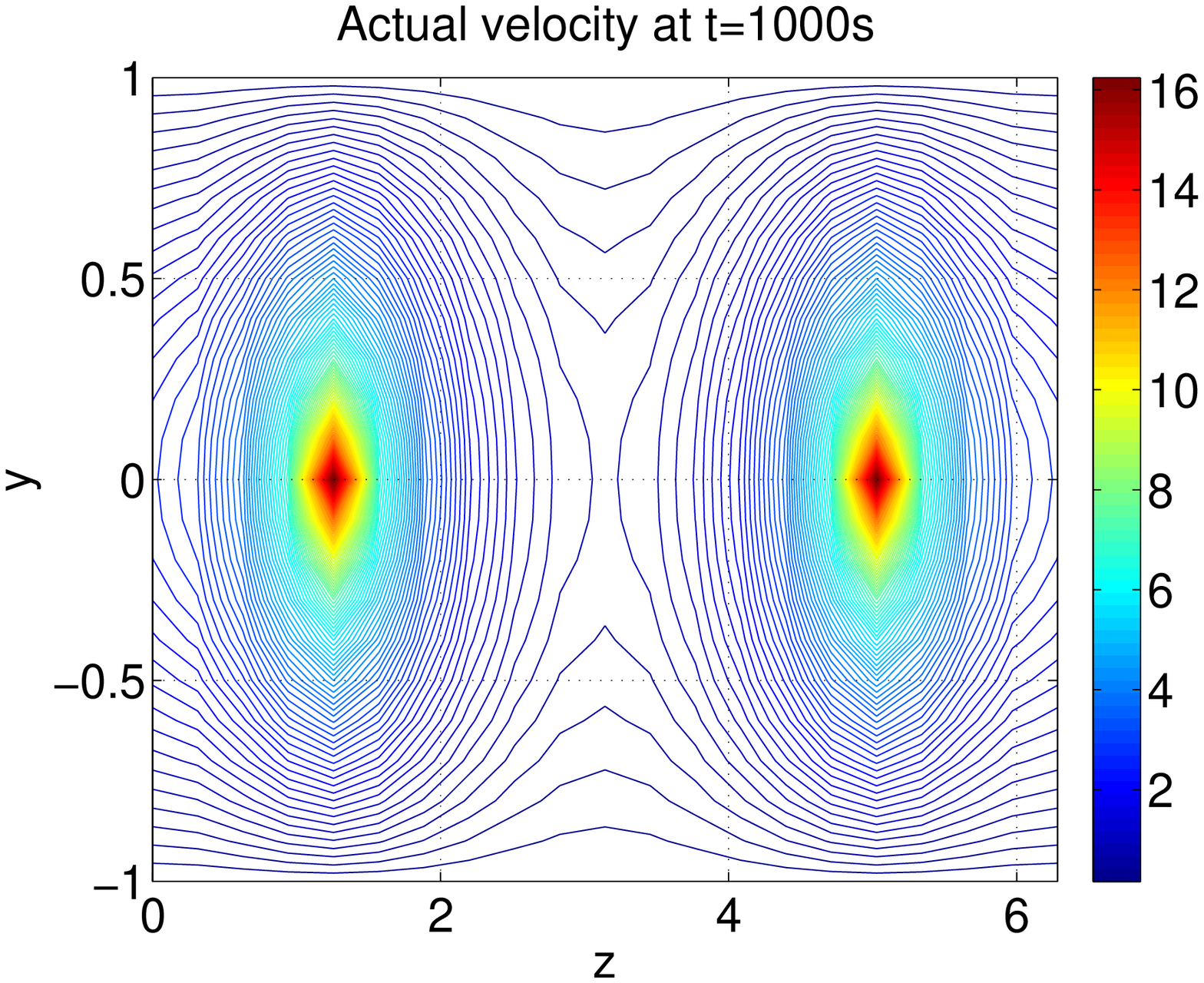} }
\subfigure[Actual vorticity at t=1000s]{
\includegraphics[width=2.4in]{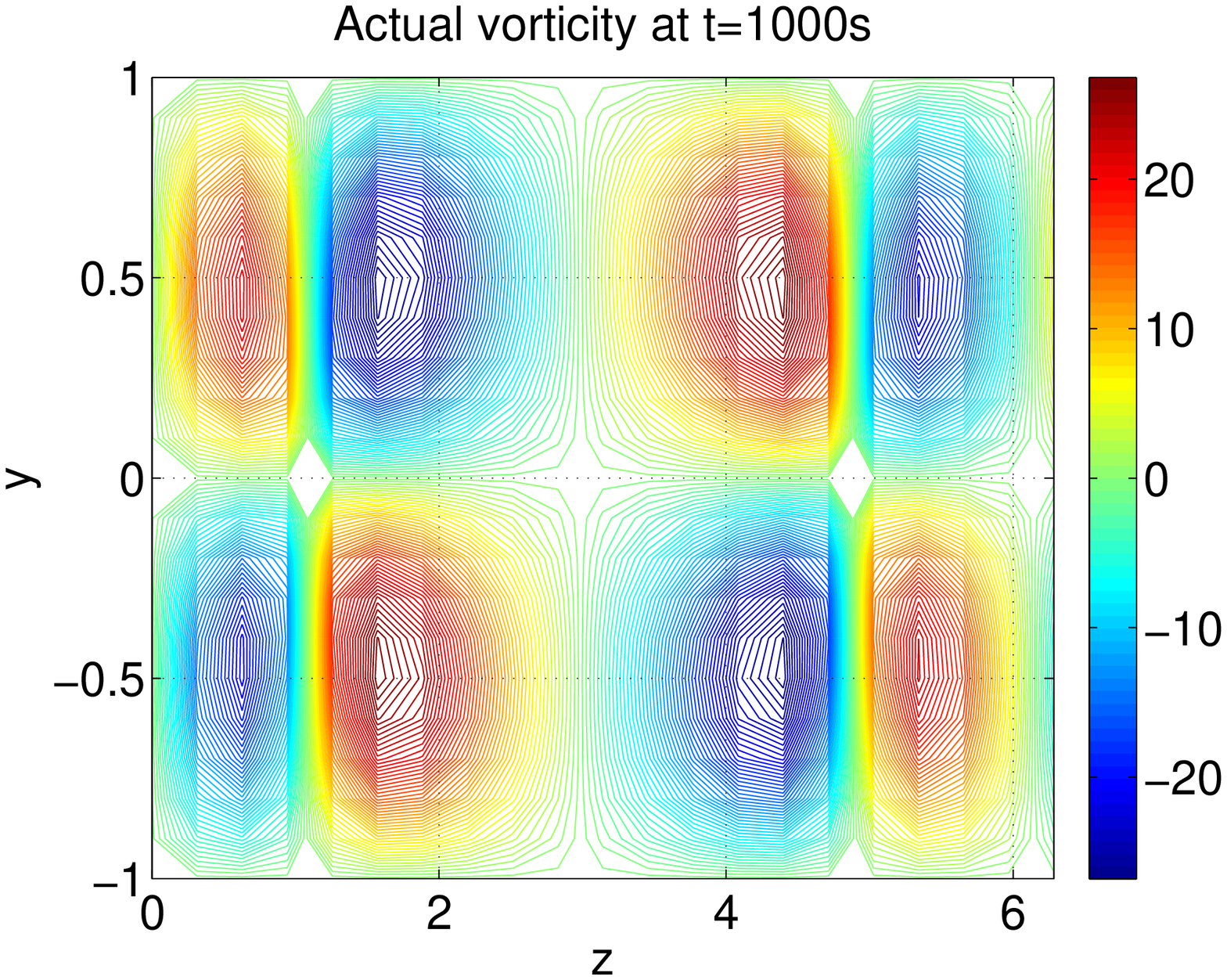}}
\caption{Actual velocity and vorticity of the channel flow problem}
\label{cfield}
\end{figure}

In Fig. \ref{channel_velocity}, we compare the velocity modes of the system using RPOD with the actual velocity modes. Also, we compare the first three vorticity modes of the system using RPOD with the actual vorticity modes in Fig. \ref{channel_vorticity}.

\begin{figure}[!htbp]
\centering
\subfigure[Actual first velocity mode]{
\includegraphics[width=1.66in]{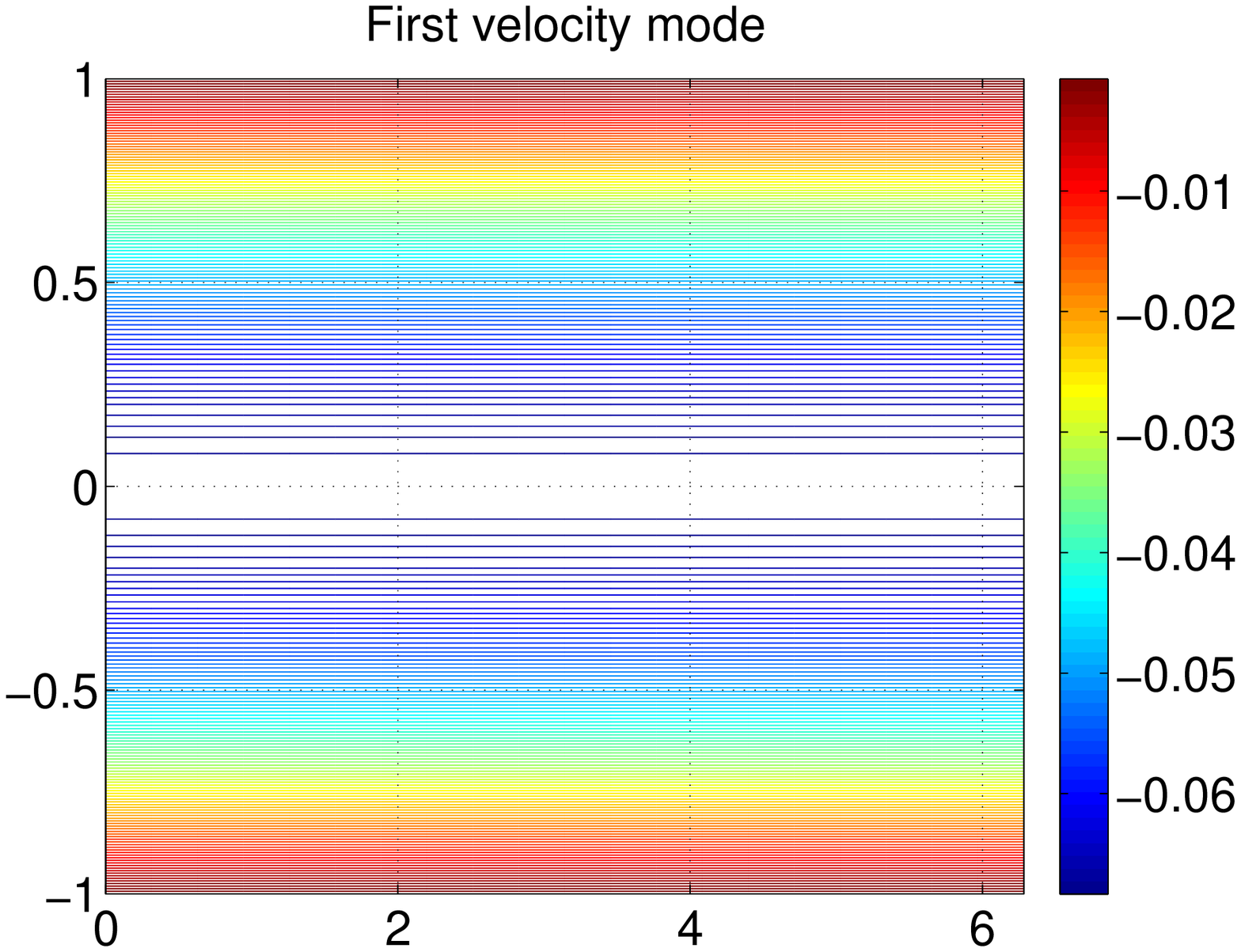}}
\subfigure[ROM first velocity mode]{
\includegraphics[width=1.66in]{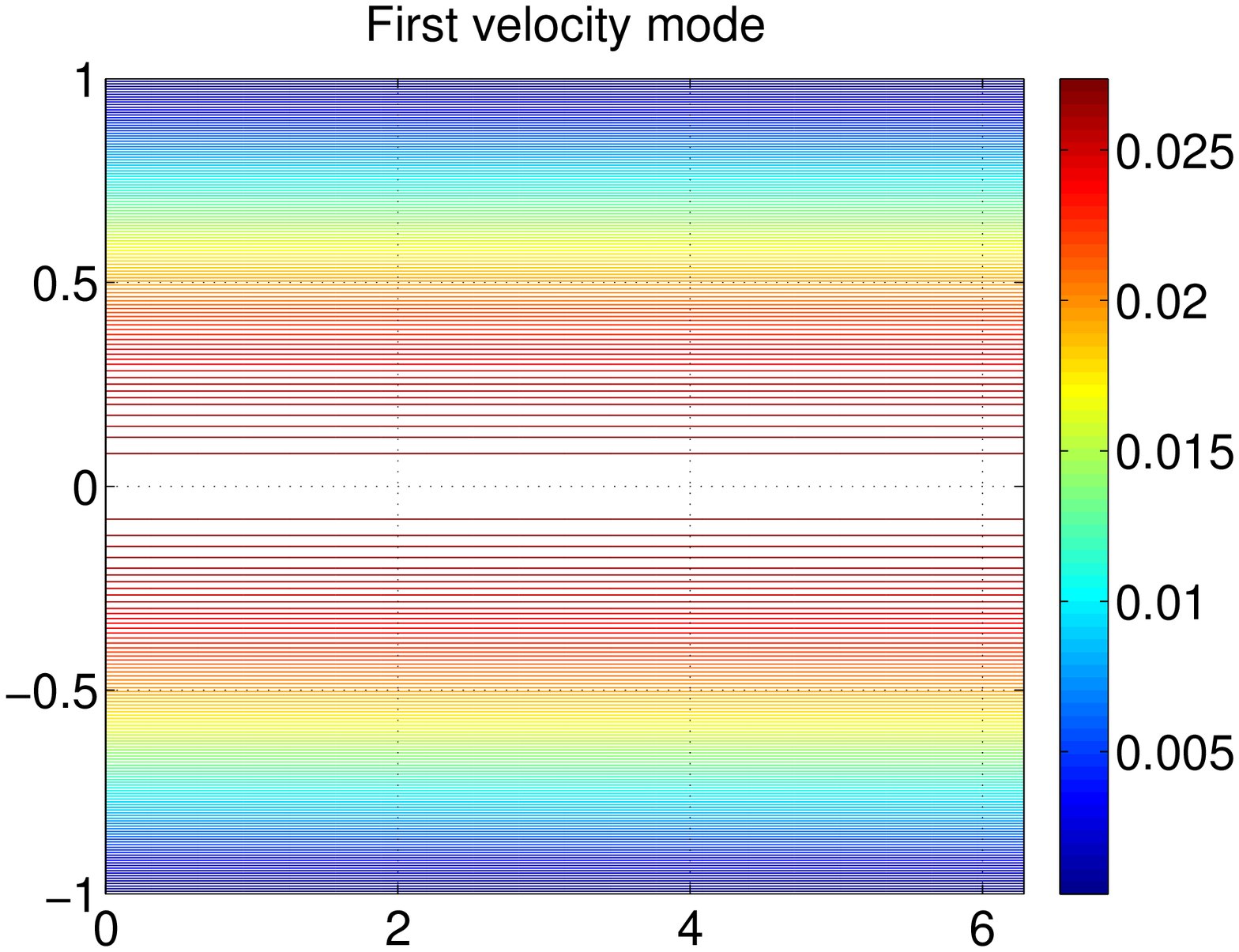}}
\subfigure[Actual second velocity mode]{\includegraphics[width=1.66in]{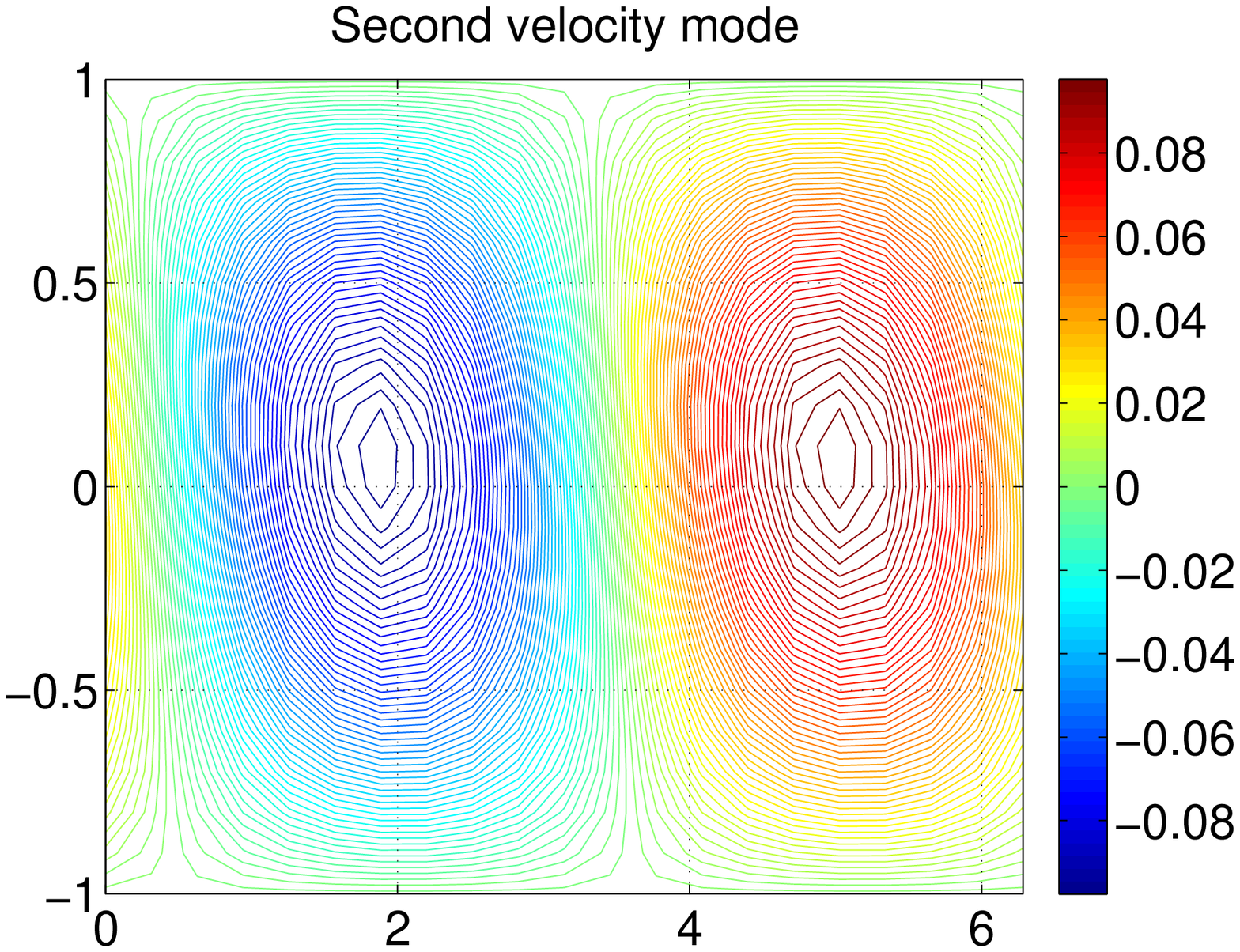}}
\subfigure[ROM second velocity mode]{\includegraphics[width=1.66in]{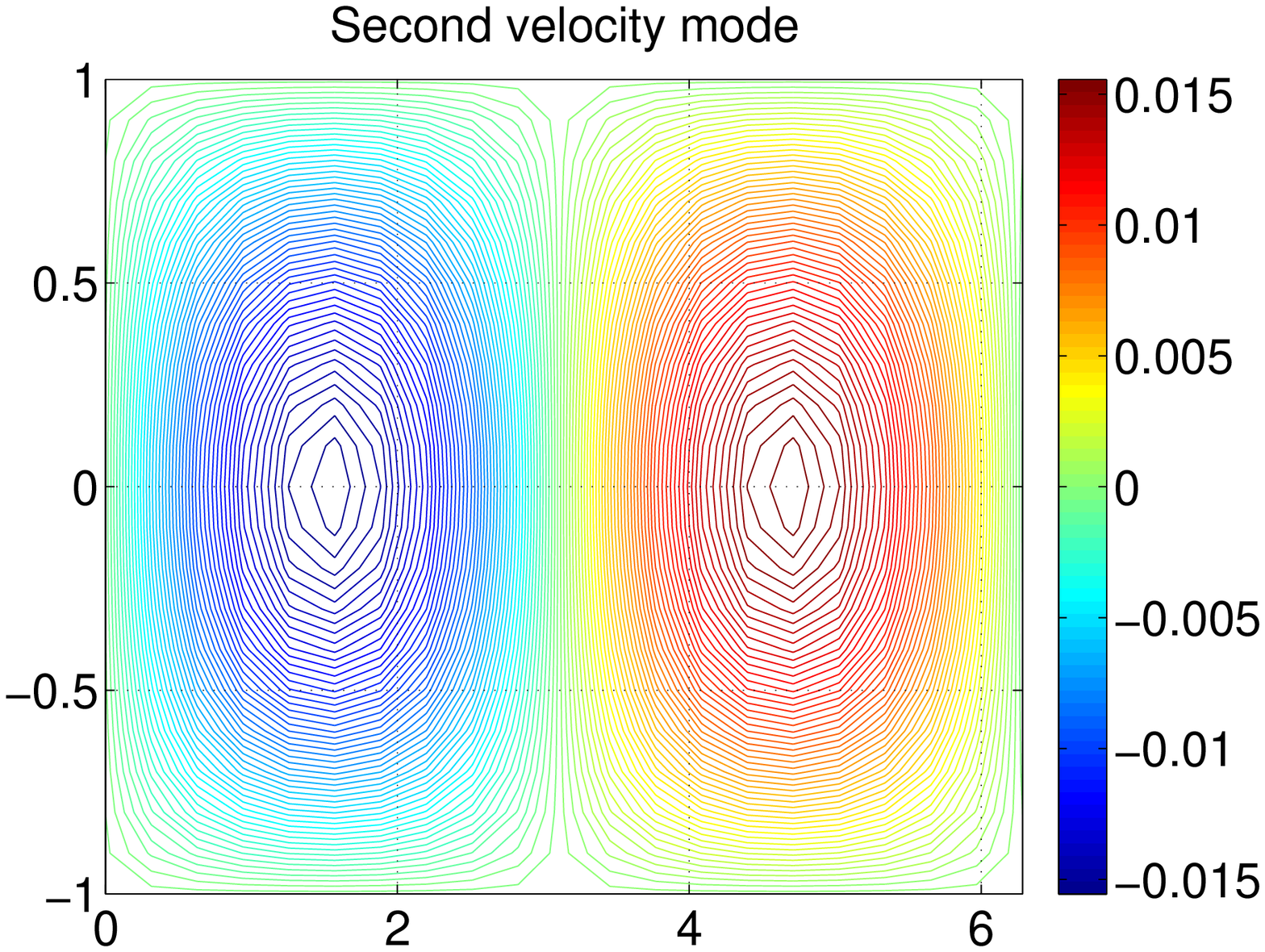}}
\subfigure[Actual third velocity mode]{\includegraphics[width=1.66in]{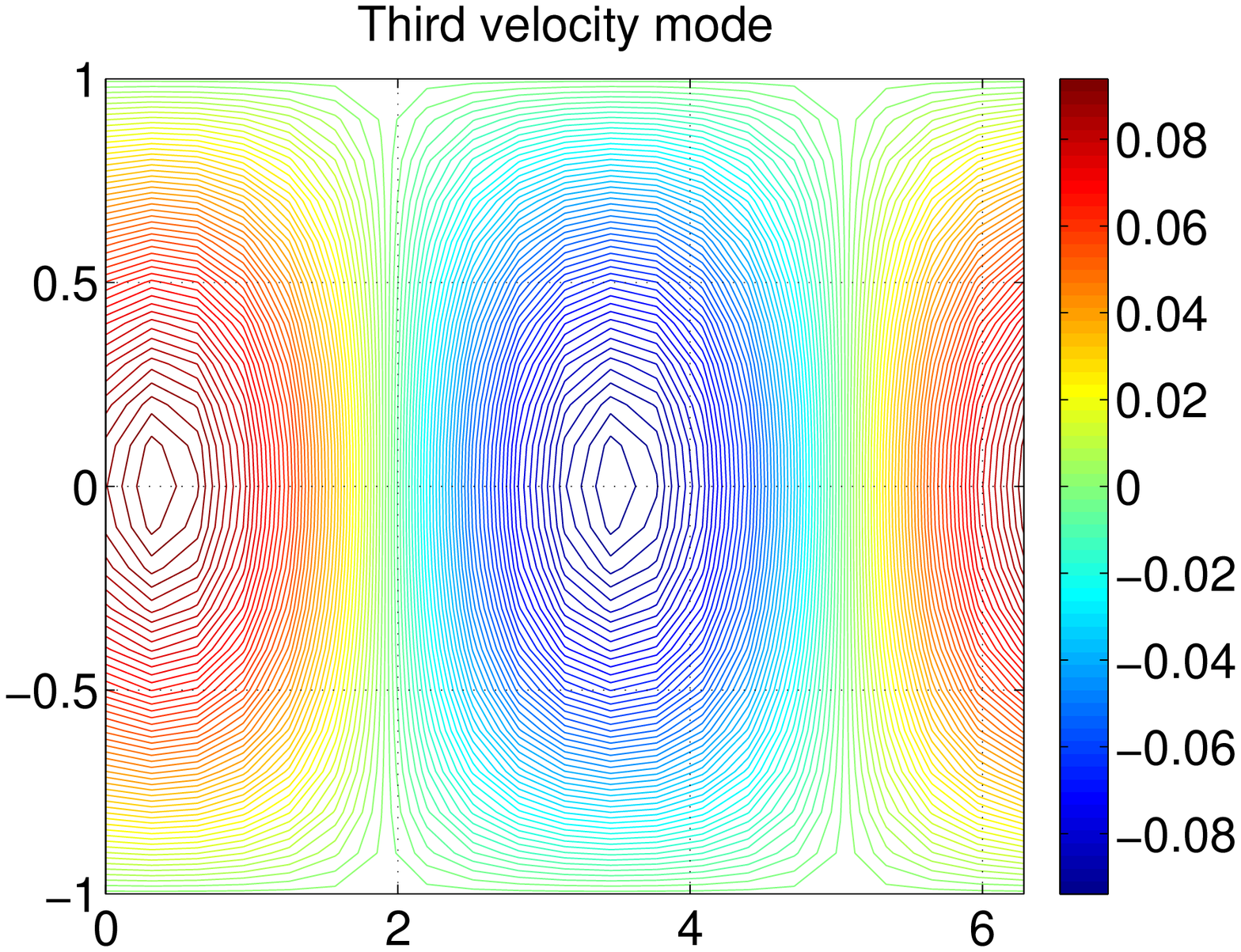}}
\subfigure[ROM third velocity mode]{\includegraphics[width=1.66in]{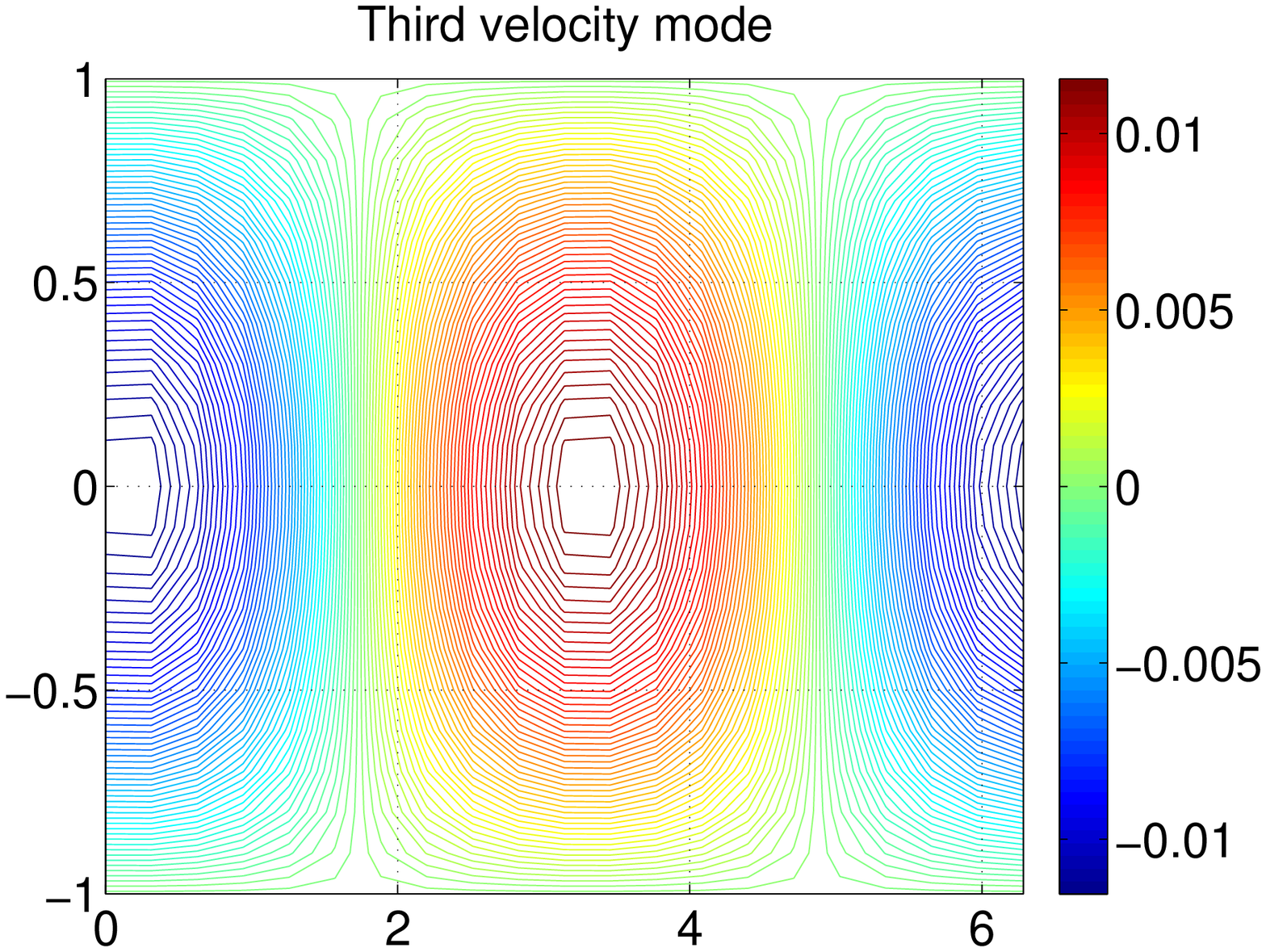}}
\caption{Comparison between ROM and actual velocity modes}
\label{channel_velocity}
\end{figure}

\begin{figure}[!htbp]
\centering
\subfigure[Actual first vorticity mode]{
\includegraphics[width=1.66in]{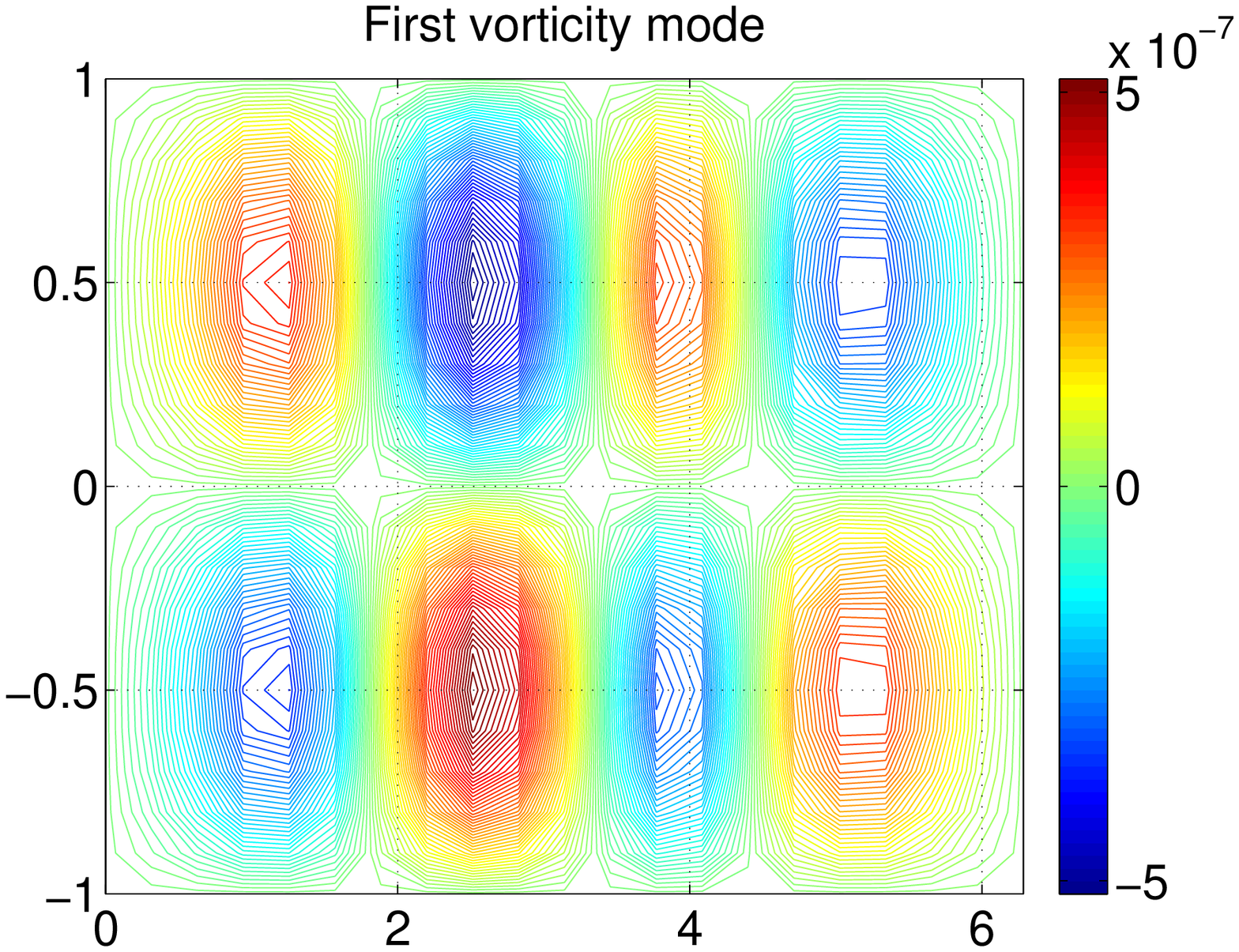}}
\subfigure[ROM first  vorticity  mode]{
\includegraphics[width=1.66in]{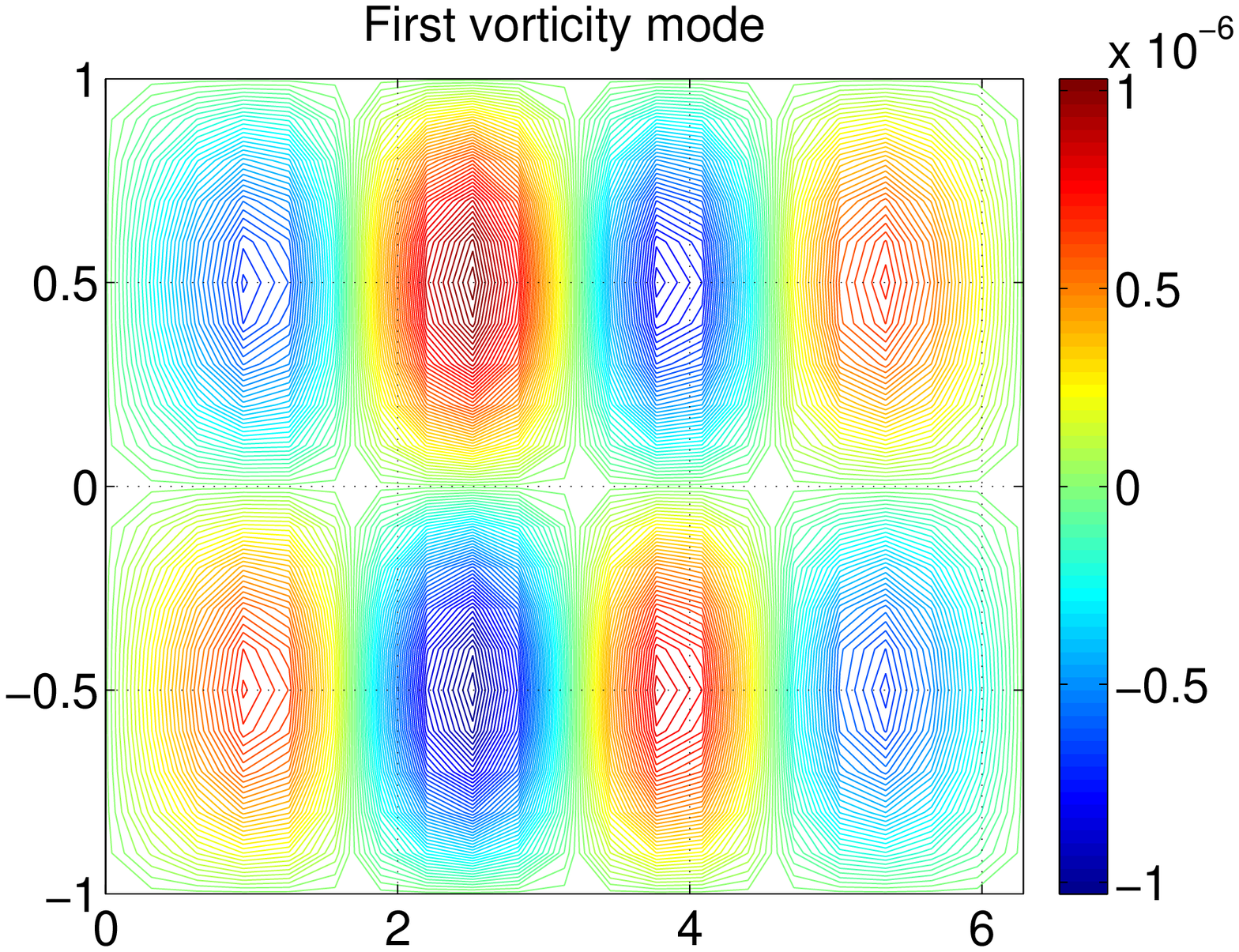}}
\subfigure[Actual second  vorticity  mode]{\includegraphics[width=1.66in]{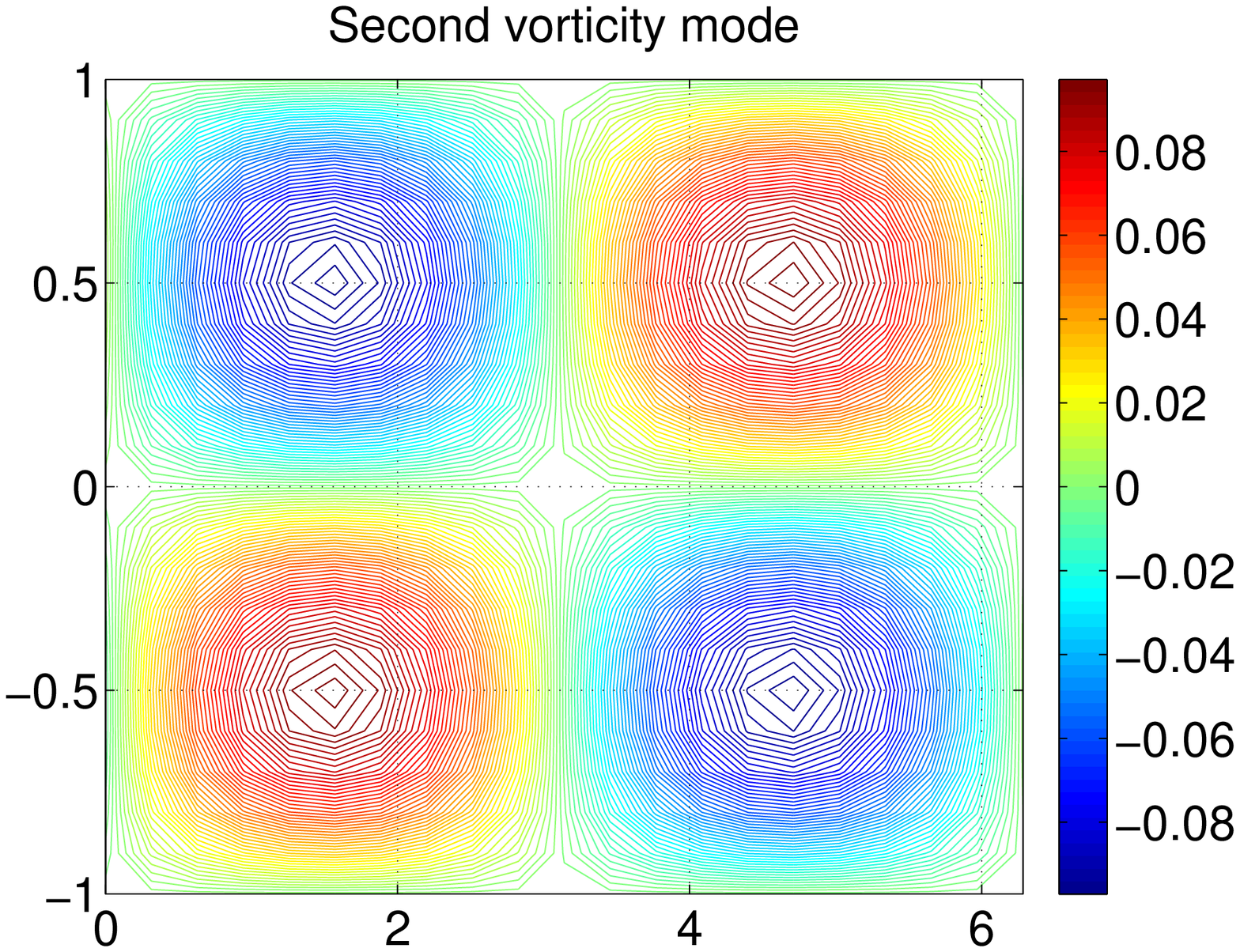}}
\subfigure[ROM second  vorticity  mode]{\includegraphics[width=1.66in]{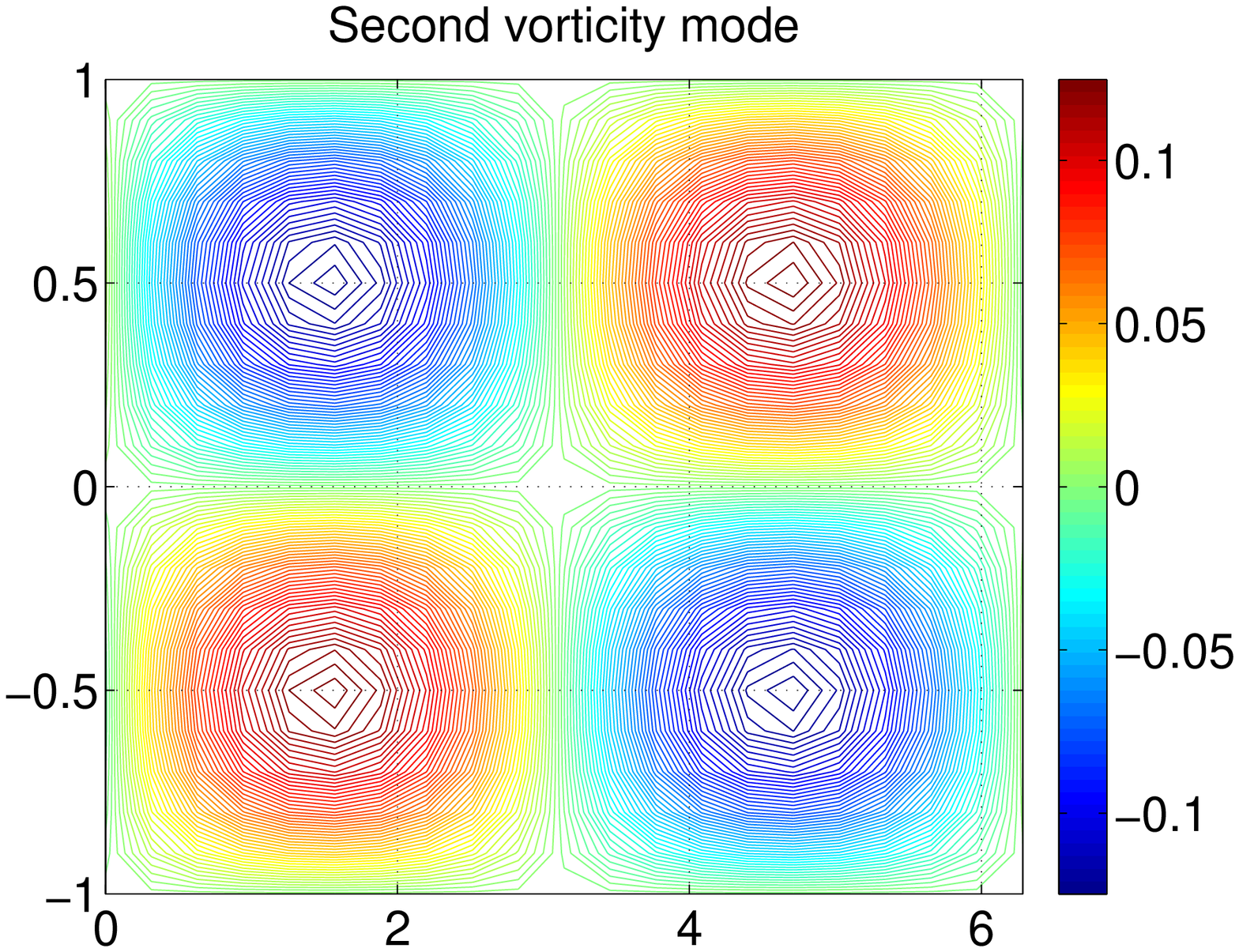}}
\subfigure[Actual third  vorticity  mode]{\includegraphics[width=1.66in]{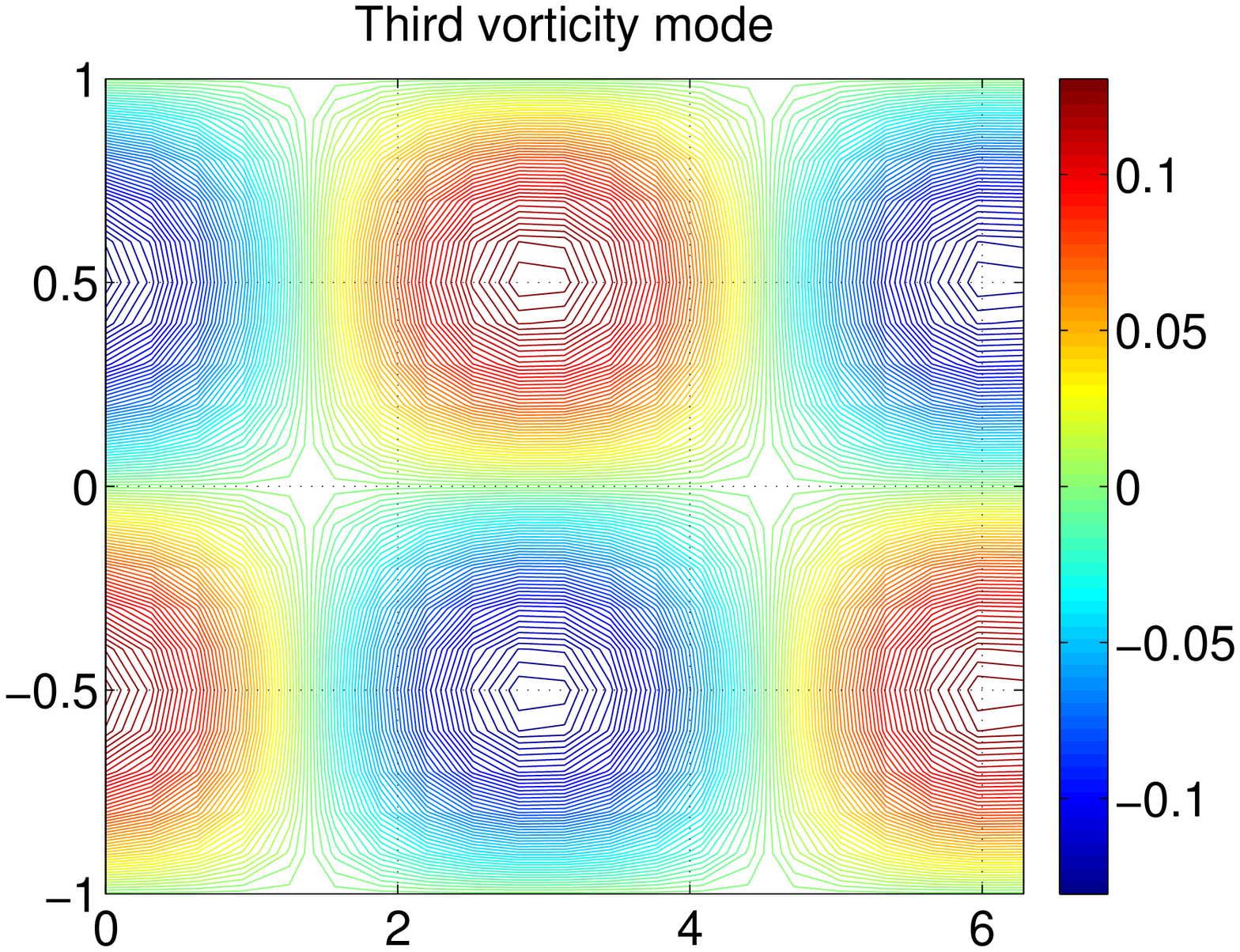}}
\subfigure[ROM third  vorticity  mode]{\includegraphics[width=1.66in]{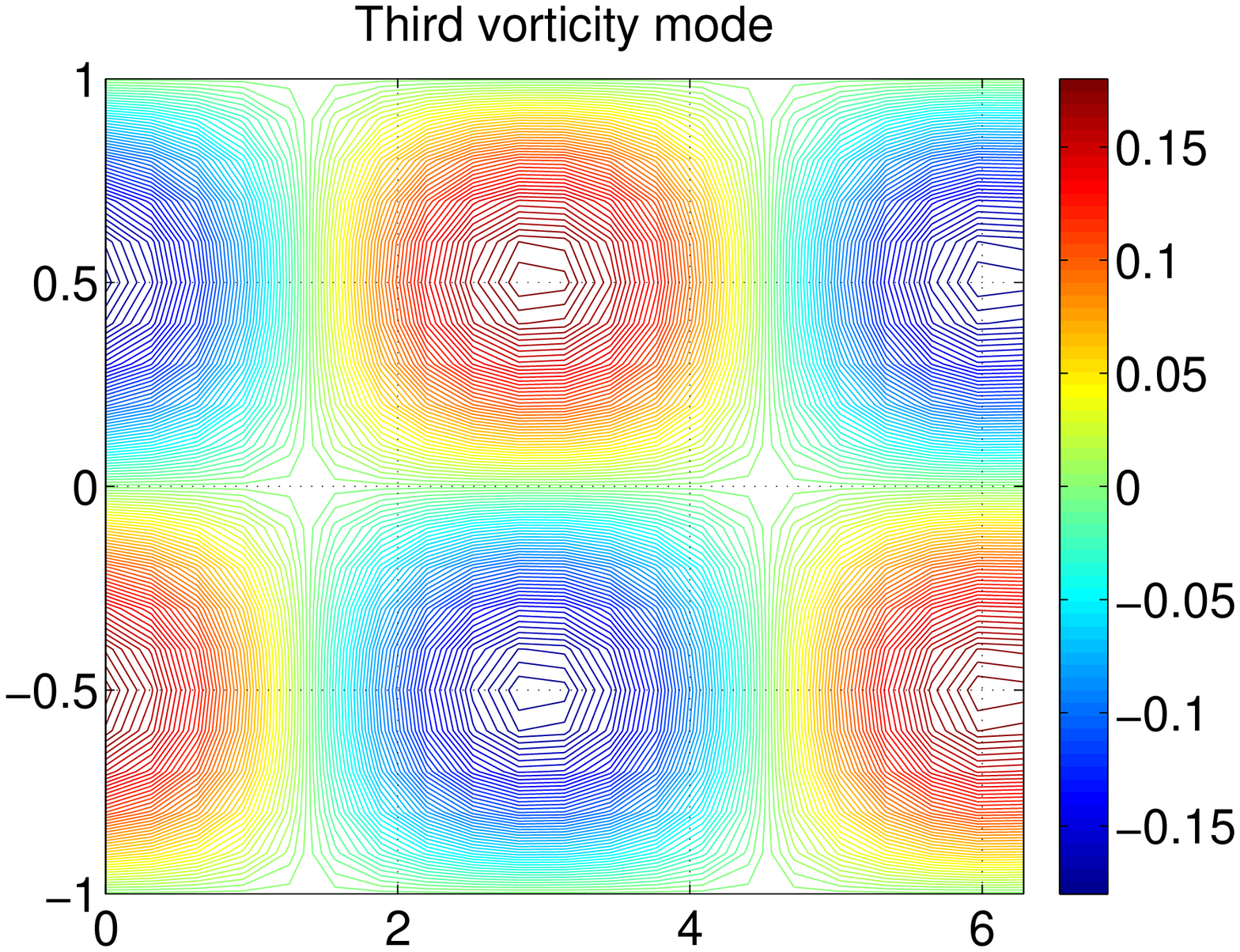}}
\caption{Comparison between ROM and actual vorticity modes}
\label{channel_vorticity}
\end{figure}

Here, we should note that the sign and the modulus of the ROM velocity modes or vorticity modes are not the same as the actual modes, however, if needed, we can rescale the ROM modes to make them match.  For both methods, we extract 40 modes, the first 30 extracted eigenvalues are compared in Fig. \ref{fig:channeleigenvalues}.
\begin{figure}[!htbp]
\centering
\includegraphics[width=2.5in]{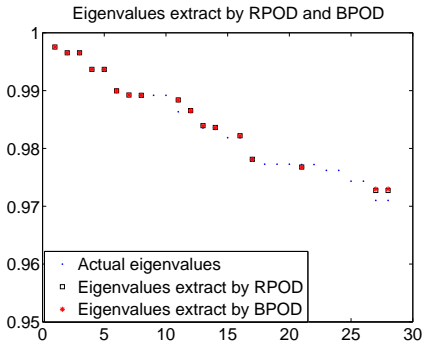}
\caption{Comparison of eigenvalues extract by RPOD and BPOD for  channel flow problem}
\label{fig:channeleigenvalues}
\end{figure}
 
The comparison of the state errors and output errors are shown in Fig. \ref{fig:cerrors}. To test the ROM, we use 20 different white noise forcings and take the average output/state error over these 20 simulation. We can see that the eigenvalues extracted by RPOD and BPOD are almost the same. In this simulation, we notice that at first, the state error and output error using BPOD are slightly better than using RPOD, but after some time, the errors are almost the same. The output errors using both methods are less than $ 0.1\%$, and the state errors using both methods are around $5\%$. Thus, we can conclude that RPOD is comparable to BPOD but requires far less computation.

\begin{figure}[!htbp]
\centering
\subfigure[Comparison of output errors]{
\includegraphics[width=2.5in]{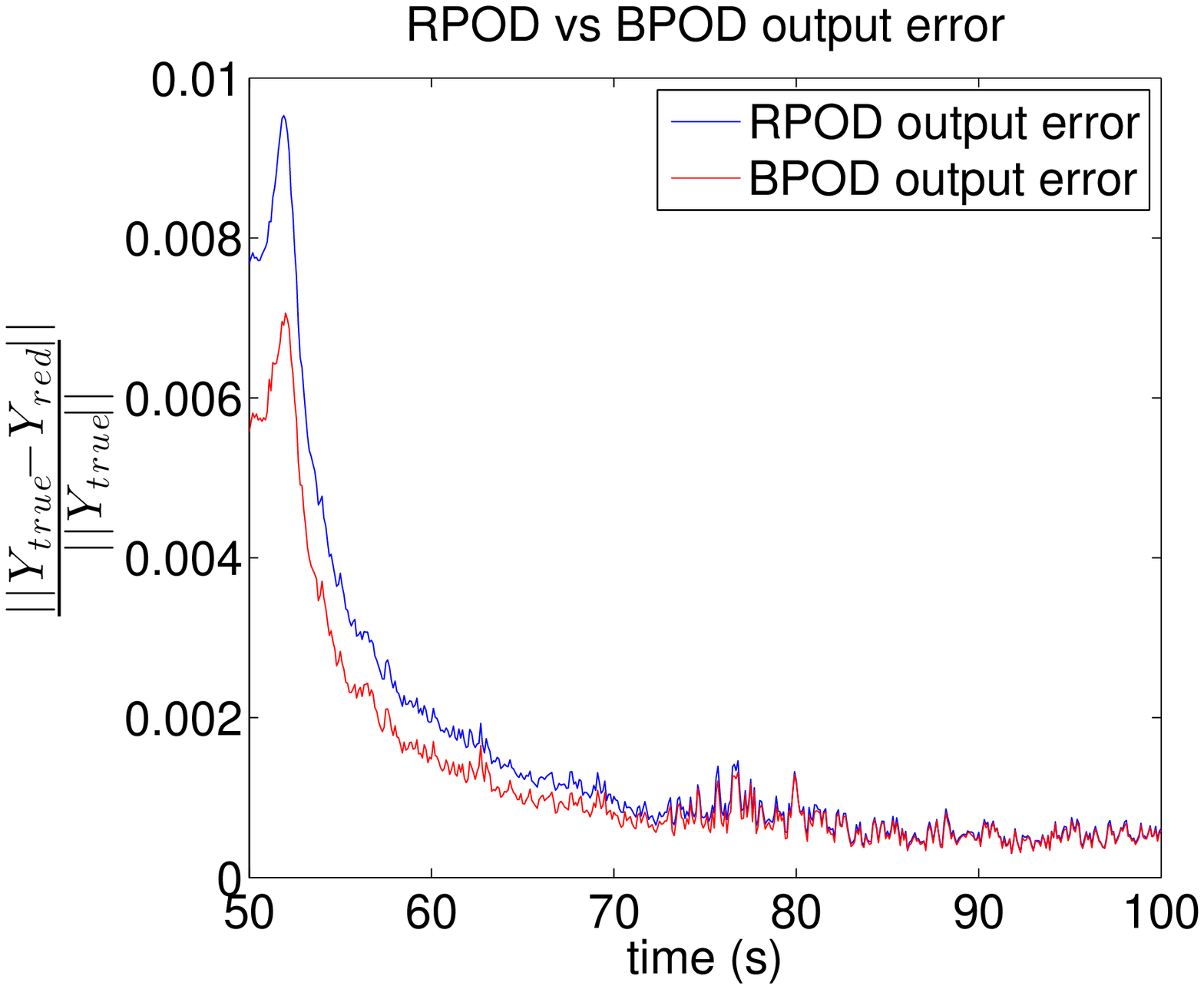}}
\subfigure[Comparison of state errors]{\includegraphics[width=2.5in]{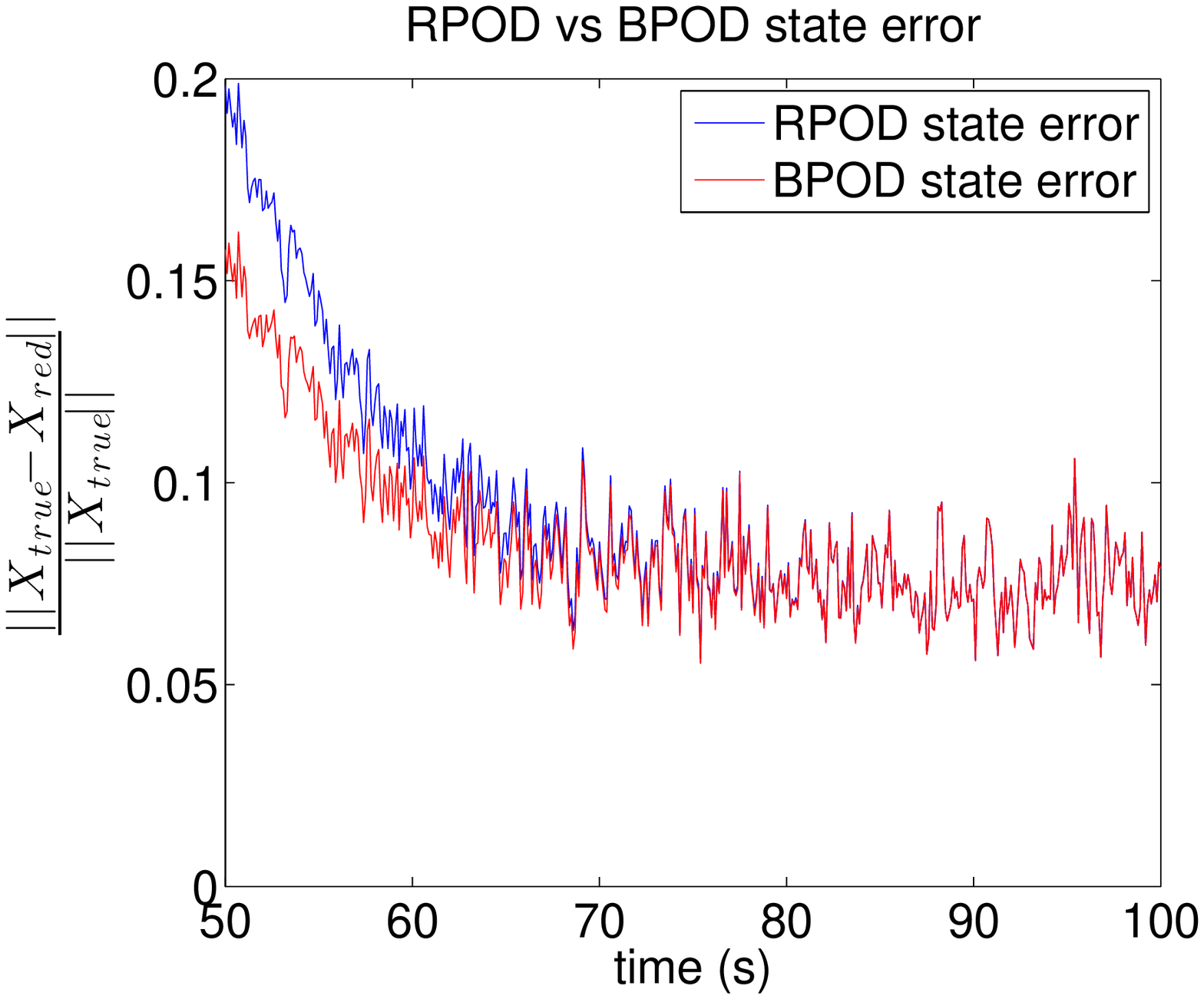}}
\caption{Comparison between RPOD and BPOD for channel flow problem}
\label{fig:cerrors}
\end{figure}

\subsection{Probability density function evolution in a 2-D damped Duffing oscillator}
The 2-D damped Duffing oscillator is: 
\begin{eqnarray}
\ddot{x}+ \eta\dot{x} + \alpha x + \beta x^3= g G(t)
\end{eqnarray}
Here, $\eta = 10$, $ \alpha = -15$, $\beta = 30$, $g=1$ (soft-spring case). 

If we are to propagate a probability density function through this system, it leads to the Fokker-Planck-Kolmogorov Equation. 
\begin{eqnarray}
\frac{\partial W(t, x)}{\partial t} =L_{FP} W(t,x),
\end{eqnarray}
where $W(t,x)$ is the probability density of the state, $L_{FP}$ is the Fokker-Planck-Kolmogorov operator, and:
\begin{eqnarray}
L_{FP} = [- \sum_{i=1}^N \frac{\partial}{ \partial x_i} D_i^{(1)}(. , .) + \sum_{i,j=1}^{N} \frac{ \partial ^2}{ \partial x_i \partial x_j } D_{ij}^{(2)} (. , .)]
\end{eqnarray}
\begin{eqnarray}
D^{(1)}(t, x) = f(t, x) + \frac{1}{2} \frac{ \partial g(t, x) } {\partial x} Q g(t, x)\\
D^{(2)}(t, x) = \frac {1}{2} g(t, x) Q g(t, x)^{T}
\end{eqnarray}

The FPK Equation is a linear, parabolic partial differential equation. Using the finite element methods, we discretize the FPK equation into 1176 grids, and we use the RPOD and BPOD method to construct a reduced order model of the FPK equation. 

First, we compare the transient probability density function using RPOD with the full order system in Fig.\ref{fig:trainsent}. We can see that at beginning, the behavior of reduced order model is not good enough, because we don't have enough modes to capture the initial transient behavior. But later, the behavior of ROM is approximately the same as the full order system. 

\begin{figure}[!htbp]
\centering
\subfigure[Actual transient pdf at t=0.2s]{
\includegraphics[width=1.66in]{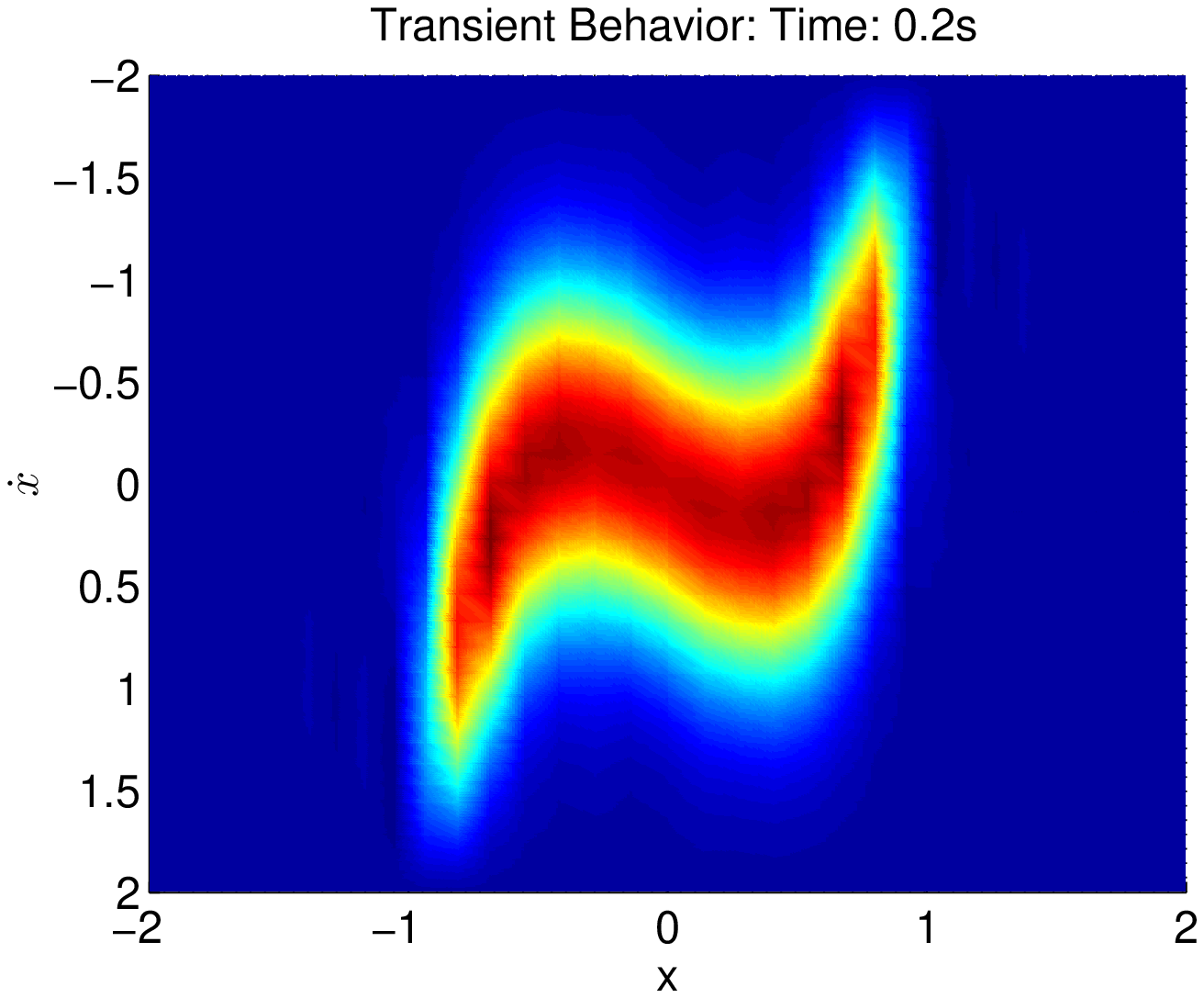}}
\subfigure[ROM transient pdf at t=0.2s]{
\includegraphics[width=1.66in]{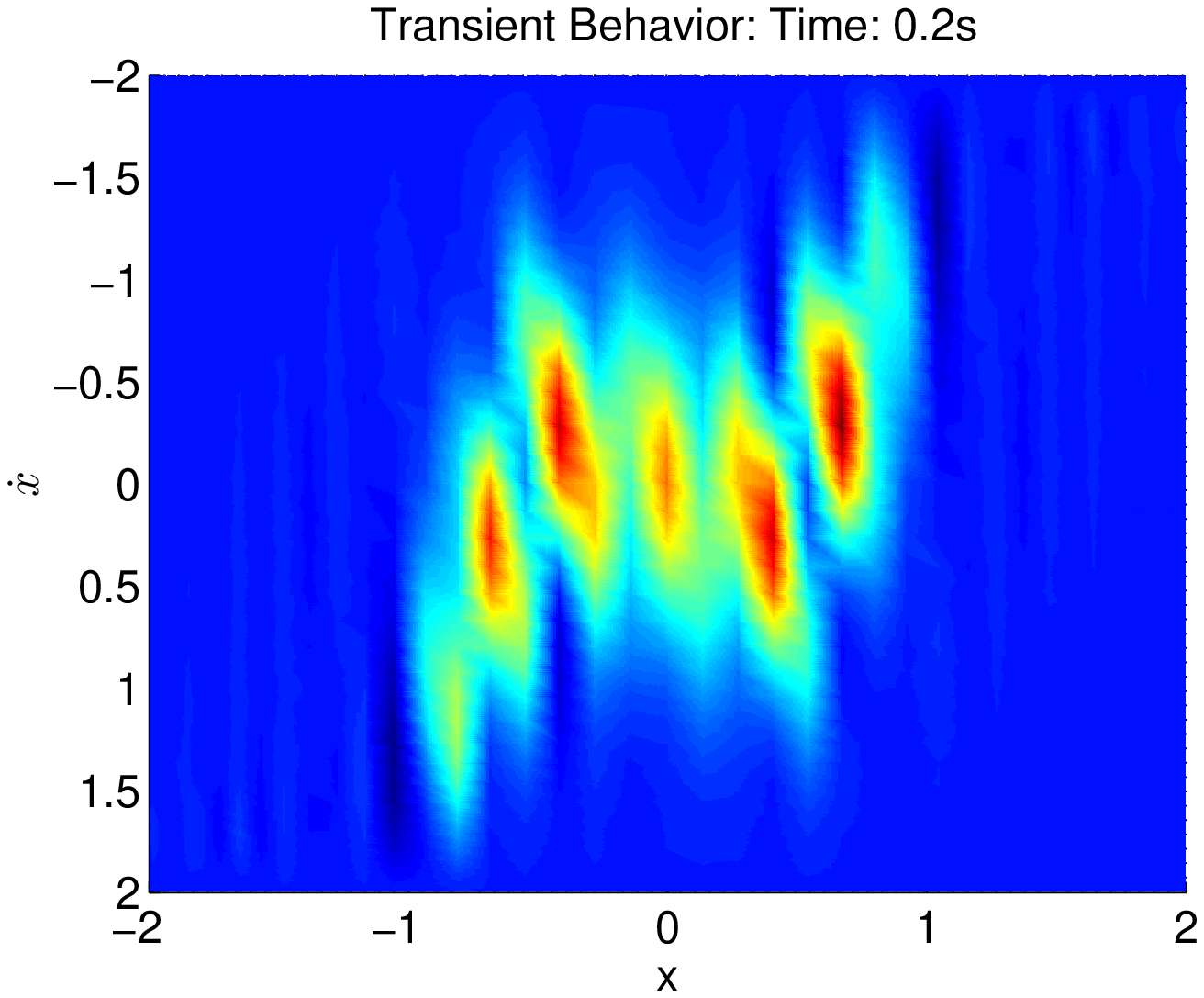}}
\subfigure[Actual transient pdf at t=0.3s]{\includegraphics[width=1.66in]{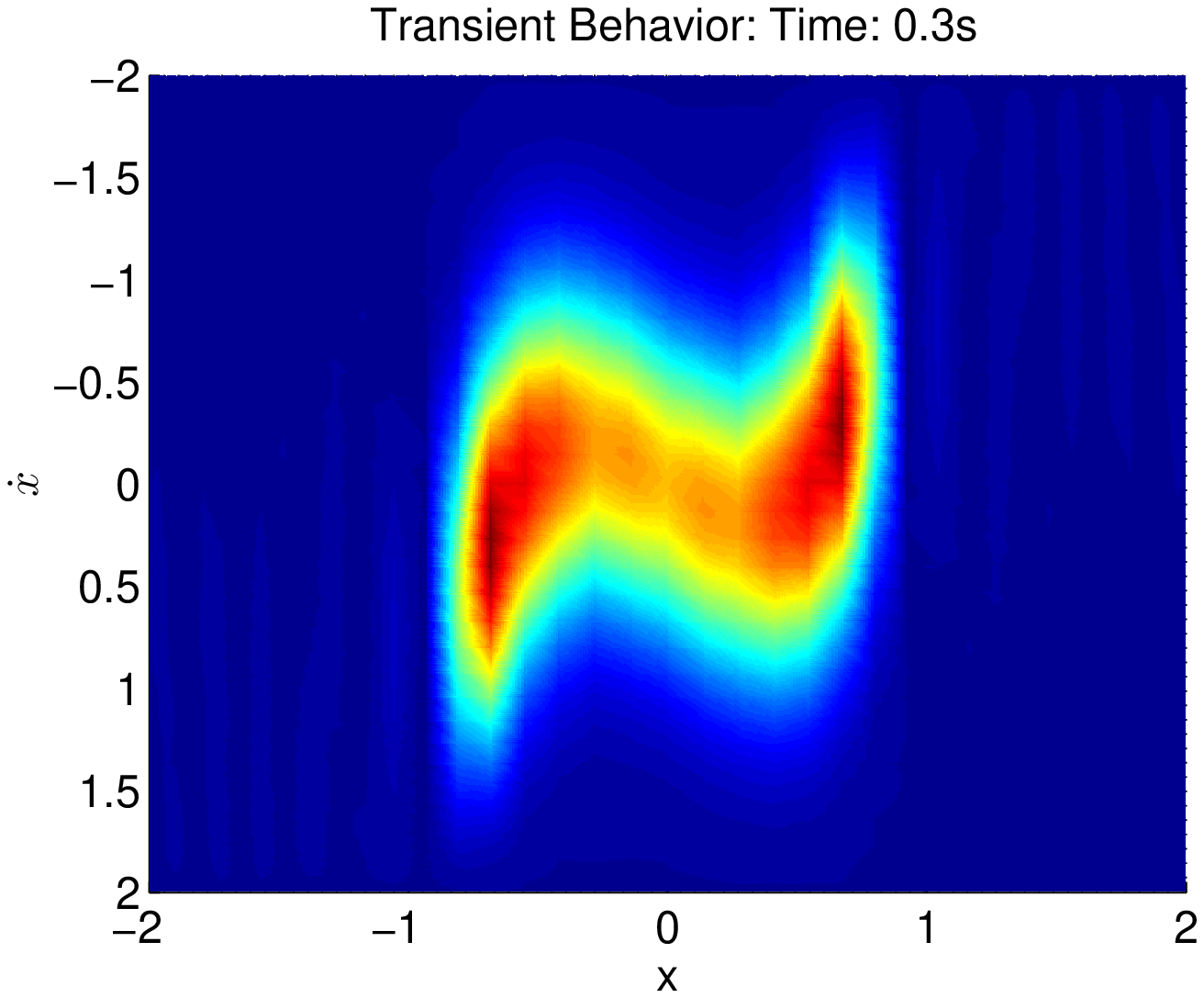}}
\subfigure[ROM transient pdf at t=0.3s]{\includegraphics[width=1.66in]{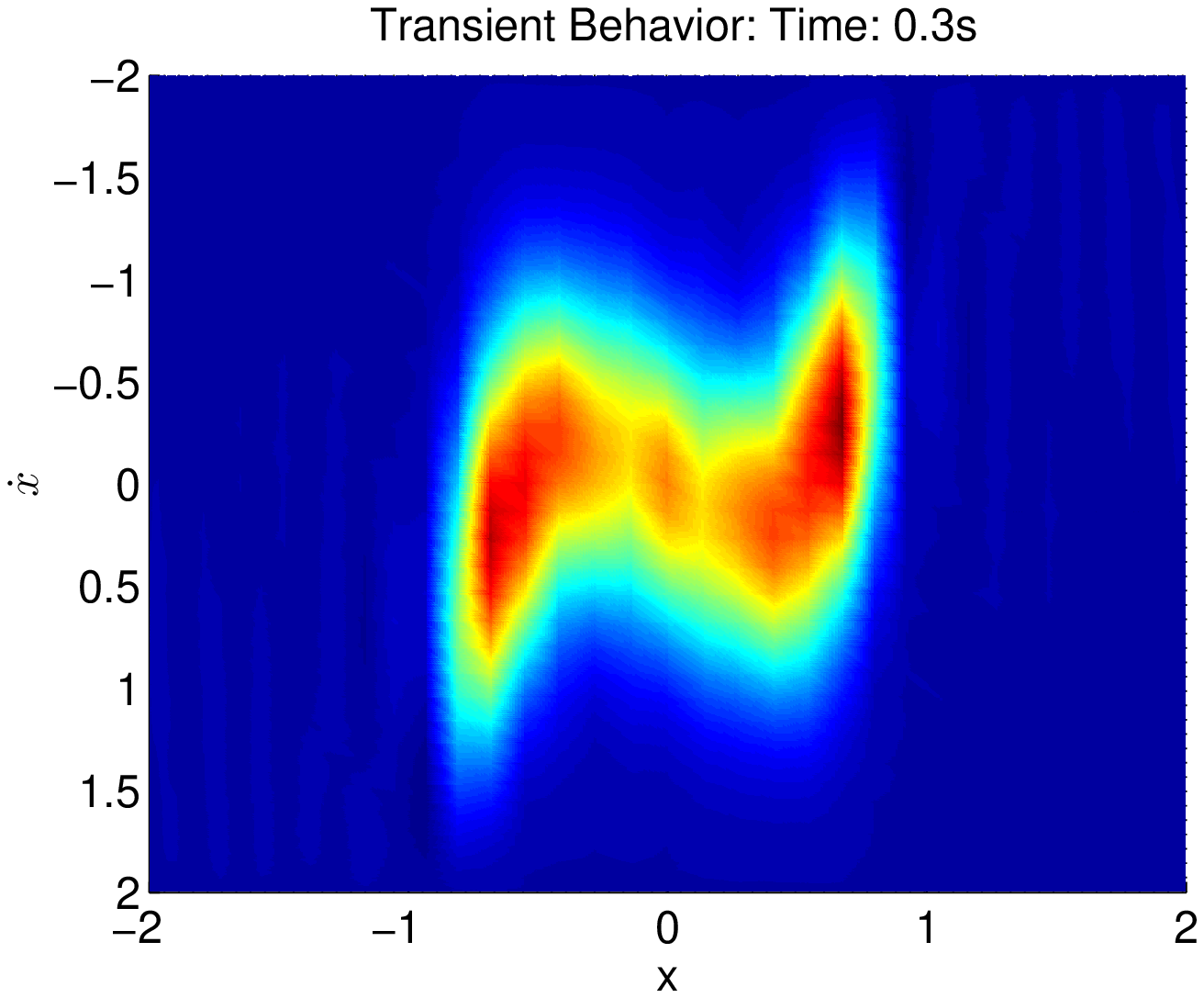}}
\subfigure[Actual transient pdf at t=0.5s]{\includegraphics[width=1.66in]{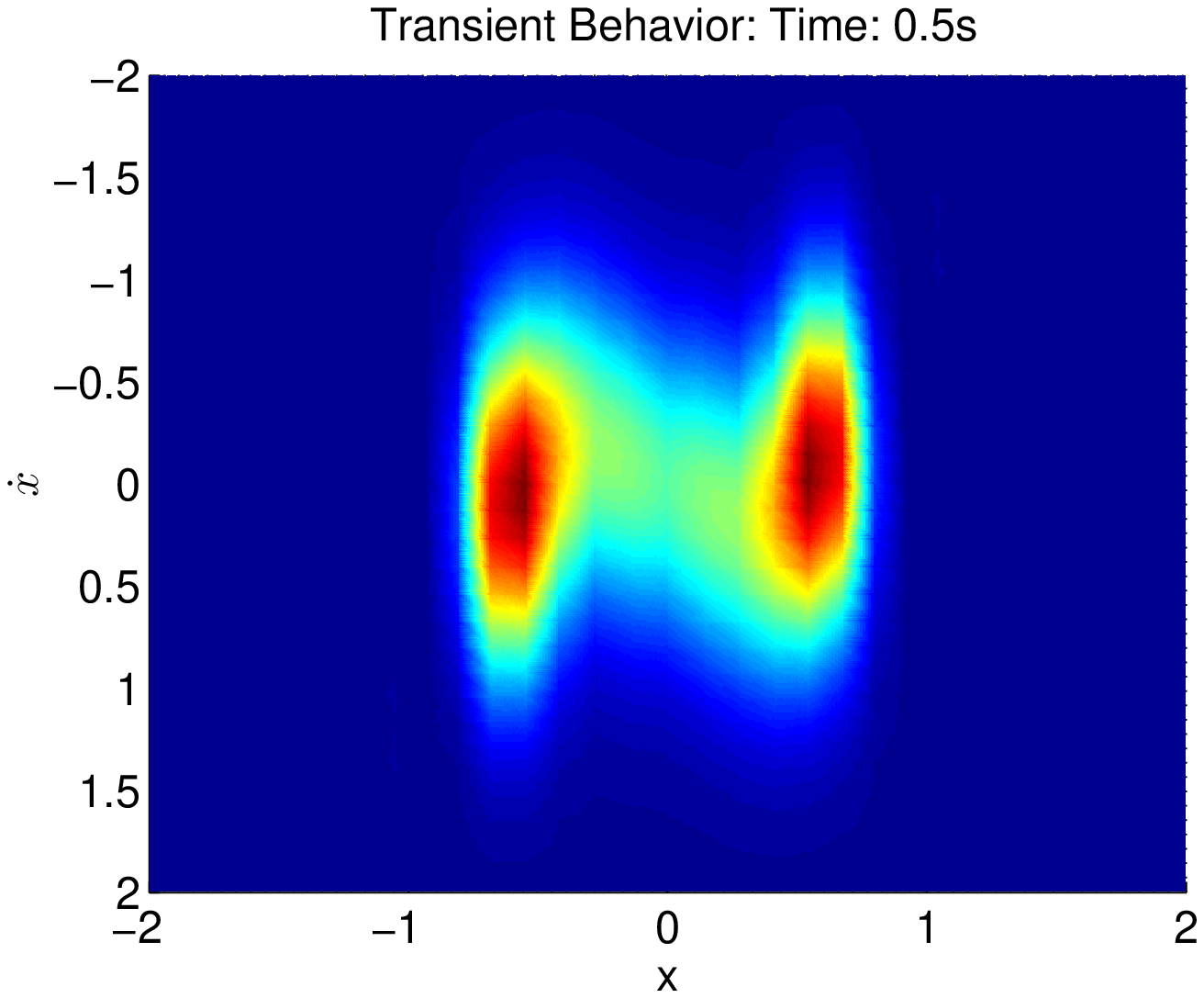}}
\subfigure[ROM transient pdf at t=0.5s]{\includegraphics[width=1.66in]{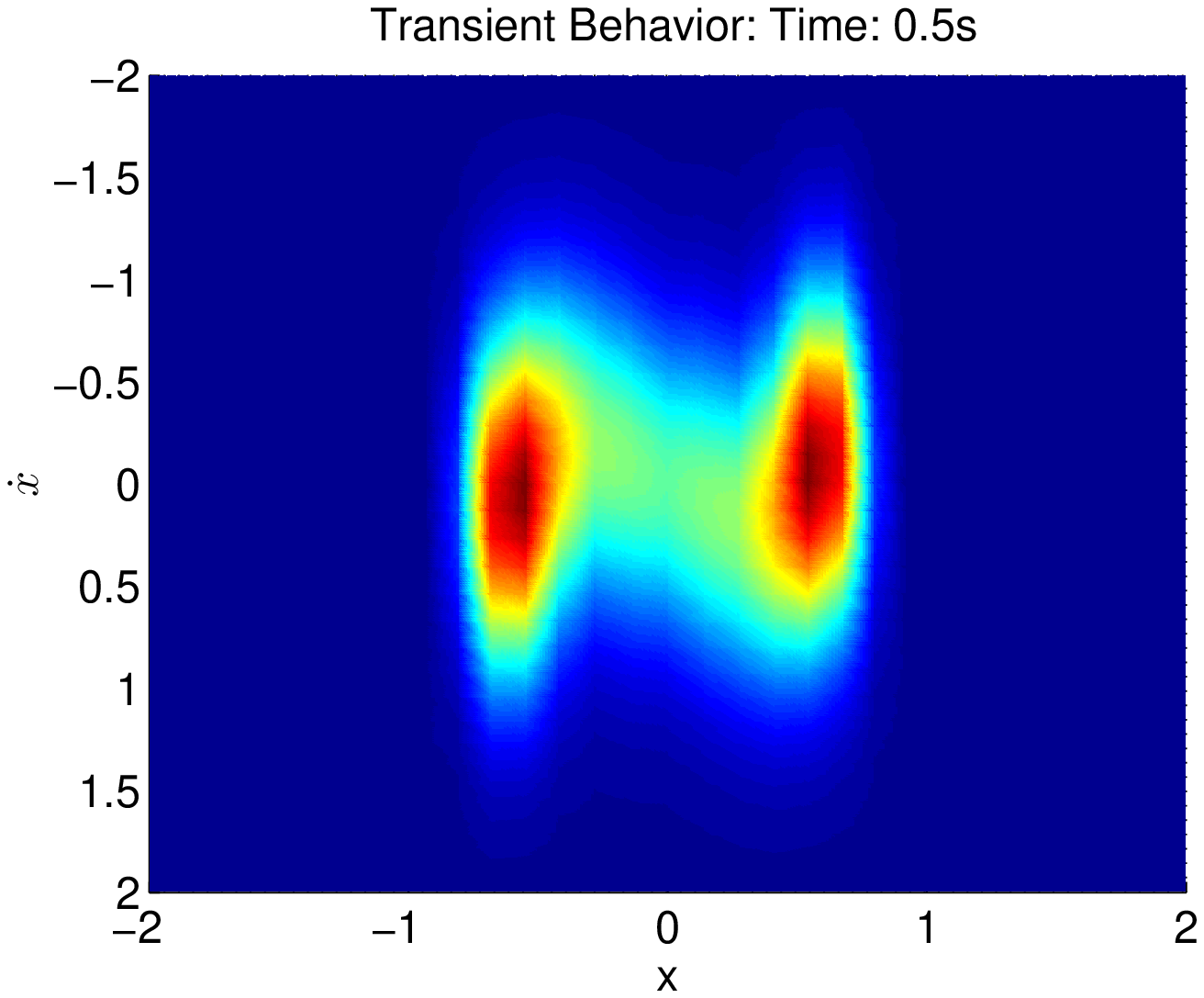}}
\caption{Comparison between ROM and actual transient pdf}
\label{fig:trainsent}
\end{figure}

Next we compare the extracted eigenvalues and state error using RPOD and BPOD. For the FPK equation, we don't have input/ output matrices, so we use different initial conditions for the primal/adjoint simulations of the discretized FPK operator. For BPOD, we take 500 input/output trajectories, and 3 snapshots from $t \in [0.1s, 0.2s]$, which leads to a $(1500 \times 1500)$ SVD problem. For RPOD, we randomly choose 294 input/output trajectories from BPOD, and take 1 snapshot at $t= 0.1s$, so we only need to solve a $(294 \times 294)$ SVD problem. A total of 30 modes is extracted by both methods, and the eigenvalues are compared in Fig. \ref{Fokker}(a), while the state errors are compared in Fig. \ref{Fokker}(b). 

\begin{figure}[!htbp]
\centering
\subfigure[Comparison of eigenvalues extract by RPOD and BPOD for 2D damped duffing oscillator]{
\includegraphics[width=2.4in]{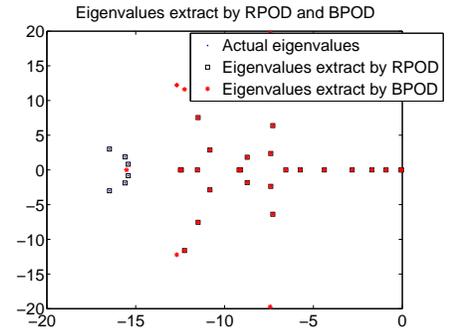}}
\subfigure[Comparison of state errors]{
\includegraphics[width=2.4in]{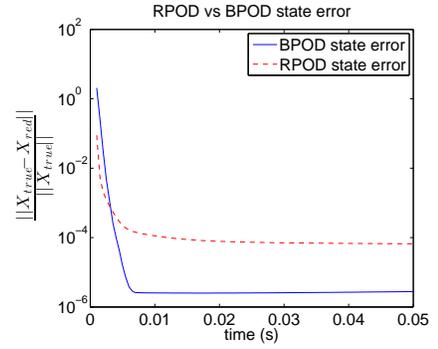}}
\caption{Comparison between RPOD and BPOD for 2D damped duffing oscillator}
\label{Fokker}
\end{figure}

We can see that the eigenvalues overlap the actual eigenvalues of the system, the state errors using BPOD  is around $0.0001\%$, and the state errors using RPOD are around $0.01\%$.

\subsection{Discussion}
We compare the computational requirements/accuracy of the ROMs resulting from the BPOD and RPOD for the Pollutant Transport equation (PT),
Channel Flow Problem (CF), and 2-D damped Duffing oscillator (DO) in Table \ref{table:results}. 


\begin{table}[htbp]
\caption{Comparison of SVD problem using BPOD $V.S.$ RPOD}
\centering
\begin{tabular}{|c|c|c|}
\hline
& size & average output error\\
\hline
PT& $(7500 \times 1500) : (1500 \times 900)$ & $0.055 \% : 0.6 \%$\\
\hline
CF & $(8000 \times 2000) : (2000 \times 400)$ & $0.13 \% : 0.16 \%$\\
\hline
DO & $(1500 \times 1500) : (294 \times 294) $& $0.007 \% : 0.017 \%$\\
\hline
\end{tabular}
\label{table:results}
\end{table}
We can see that RPOD solves a much smaller SVD problem than the BPOD, and although the errors incurred using RPOD are more than the BPOD, they are small enough not to make a major difference to the results. Thus, using the RPOD to generate a ROM is much more efficient while not sacrificing too much accuracy.

Moreover, sometimes, it may be impossible to solve the SVD problem resulting from BPOD. For example, in the linearized Channel flow problem, if  we use the full state measurements(882 measurements) and we take 20 snapshots for the adjoint simulation, there are 80 sources on the bounday and we take 1000 snapshots for the primal simulation, then we need to solve a $17640 \times 80000$ SVD problem for BPOD, which is not solvable in Matlab. For RPOD, we randomly choose 50 sources on the boundaries, and randomly choose 400 measurements. If we take 100 snapshots for the primal simulation, and 20 snapshots for the adjoint simulation, then it leads to a $8000 \times 5000$ SVD problem, which is a relatively small problem. We compare the first 70 extracted eigenvalues with the actual eigenvalues and  the output errors in Figure \ref{channel:new}. Thus, in problems where there are a large numbers of actuators/sensors, the savings can be very significant. In terms of an experiment, this observation may have added implications as it implies that we can reduce the scale of the instrumentation required to get the data required to form an ROM by orders of magnitude without losing much information that can be extracted from the resulting data, which can result in significant cost savings.

\begin{figure}[!htbp]
\centering
\subfigure[Eigenvalues extract by RPOD for channel flow problem]{
\includegraphics[width=2.4in]{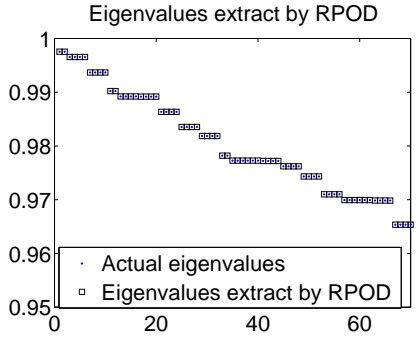}}
\subfigure[Output errors using RPOD]{
\includegraphics[width=2.4in]{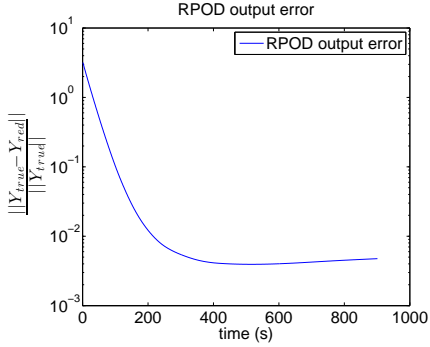}}
\caption{Simulation results using RPOD for channel flow problem}
\label{channel:new}
\end{figure}
\section{Conclusion}
In this paper, we have introduced a randomized POD (RPOD) procedure for the extraction of ROMs for large scale systems such as those governed by PDEs. The RPOD procedure extracts almost the same information from a randomly chosen sub-Hankel matrix extracted from the full order Hankel matrix as is obtained by the BPOD procedure from the full order Hankel matrix without sacrificing too much accuracy. This leads to an orders of magnitude reduction in the computation required for constructing ROMs for large scale systems with a large number of inputs/ outputs over the BPOD procedure. The computational results shown for a set of moderately high dimensional advection diffusion equations seem to reach the same conclusion. The next step in this process would require us to consider more realistic, high dimensional, and nonlinear PDEs arising in problems such as fluid flows and aeroelasticity. 
\appendix[Reconstructed eigenvalues' and eigenvectors' errors]\label{appendix A} Here, we establish bounds on the eigenfunction reconstruction errors using the cross correlation matrix $Y'X$. The eigenfunction reconstruction using the auto correlation matrix $X'X$ is a special case of this proof. 

We denote
\begin{eqnarray}
X=\left(\begin{array}{cc} V_S & V_D\end{array}\right) \left(\begin{array}{c} \alpha_S\\ \delta \alpha_D \end{array}\right) \nonumber \\
Y=\left(\begin{array}{cc} U_S & U_D\end{array}\right) \left(\begin{array}{c} \beta_S\\ \delta \beta_D \end{array}\right)
\end{eqnarray}
where $U_S$,$V_S$ are the active left and right eigenvectors corresponding to the same eigenvalues $\Lambda_S$ in the snapshots, and $U_D $, $V_D$ are rest of the left and right eigenvectors. As we have assumed before, $\| \delta \alpha_D \| \varpropto o(\epsilon)$, and $\| \delta \beta_D \| \varpropto o(\epsilon)$, where $\epsilon$ is sufficient small. First, we need to solve the SVD problem of $Y^TX$. 
\begin{eqnarray}
Y^TX= (\beta_S'U_S' + \delta \beta_D' U_D') (V_S \alpha_S+ V_D \delta \alpha_D) \nonumber \\
 = \beta_S'\alpha_S +  \underbrace{\delta \beta_D' \delta \alpha_D }_{\Delta_1} = \beta_S' \alpha_S + \Delta_1
\end{eqnarray}
where $\| \Delta_1 \| \varpropto o(\epsilon^2)$, and thus $\| Y^TX - \beta_S'\alpha_S \| \leq c_1 \epsilon^2$.
If $(U_p, \Sigma_p, V_p)$ are the left singular vectors, non-zero singular values and right singular vectors of $Y^TX$, i.e.
\begin{eqnarray}
Y^TX=U_p\Sigma_pV_p^T \nonumber \\
(\beta_S'U_S')(V_S \alpha_S) = \beta_S'\alpha_S = \hat{U}_p \hat{\Sigma}_p \hat{V}_p^T\label{svd_yx}
\end{eqnarray}
where $(\hat{U}_p, \hat{\Sigma}_p, \hat{V}_p)$ are the left singular vectors, non-zeros singular values, and right singular vectors of $\beta_S' \alpha_S$. From the eigenvalue perturbation theory, $\| V_p -\hat{V}_p \| \varpropto o(\epsilon^2)$, $\| U_p -\hat{U}_p \| \varpropto o(\epsilon^2)$,  $\| \Sigma_p -\hat{\Sigma}_p \| \varpropto o(\epsilon^2)$.

Thus,
\begin{eqnarray}
V_p \Sigma_p^{-1/2} = \hat{V}_p \hat{\Sigma}_p^{-1/2} +\Delta_2\nonumber \\
U_p \Sigma_p^{-1/2} = \hat{U}_p \hat{\Sigma}_p^{-1/2} + \Delta_3 \label{uv}
\end{eqnarray}
where $\| \Delta_2 \|, \| \Delta_3 \| \varpropto o(\epsilon^2)$.
The POD basis can be constructed as:
\begin{eqnarray}
T_r=XV_p\Sigma_p^{-1/2} \nonumber \\
T_l=\Sigma_p^{-1/2}U_p^TY^T 
\end{eqnarray}

We have:
\begin{eqnarray}
Y'AX= (\beta_S'U_S' + \delta \beta_D' U_D' )A ( V_S \alpha_S + V_D \delta \alpha_D)\nonumber \\
= \beta_S'\Lambda_S\alpha_S +\underbrace{\delta \beta_D' \Lambda_D \delta \alpha_D}_{\Delta_4} = \beta_S'\Lambda_S\alpha_S + \Delta_4 \label{yax}
\end{eqnarray}
where $\| \Delta_4 \|\varpropto o(\epsilon^2)$.
The reduced order system using this set of POD basis is:
\begin{eqnarray}
\tilde{A} = T_lAT_r= (\Sigma_p^{-1/2}U_p'){(Y'AX)}(V_p \Sigma_p^{-1/2}) \label{ta}
\end{eqnarray}

Substitute Equation (\ref{uv}) and Equation (\ref{yax}) into Equation (\ref{ta}), 
\begin{eqnarray}
\tilde{A} = T_lAT_r\nonumber \\
= (\hat{\Sigma}_p^{-1/2}\hat{U}_p' + \Delta_3)(\beta_S' \Lambda_S \alpha_S + \Delta_4)(\hat{V}_p \hat{\Sigma}_p^{-1/2} + \Delta_2) \nonumber \\
 =\underbrace{( \hat{\Sigma}_p^{-1/2}\hat{U}_p' \beta_S' )}_{P}\Lambda_S\underbrace{(\alpha_S\hat{V}_p \hat{\Sigma}_p^{-1/2})}_{\hat{P}}+ \Delta_5 = \hat{A} + \Delta_5 \label{AA}
\end{eqnarray}
where $\| \Delta_5 \| \varpropto o(\epsilon^2)$. We want to show $P\hat{P} = I$
\begin{eqnarray}
P\hat{P}= \hat{\Sigma}_p^{-1/2}\underbrace{\hat{U}_p' \hat{U}_p}_{I}\hat{\Sigma}_p\underbrace{\hat{V}_p'\hat{V}_p}_{I} \hat{\Sigma}_p^{-1/2}=\hat{\Sigma}_p^{-1/2}\hat{\Sigma}_p \hat{\Sigma}_p^{-1/2}=I
\end{eqnarray}

Since $P$ and $\hat{P}$ are square matrices, thus, $\hat{P} = P^{-1}$, $\hat{A} = P \Lambda_S P^{-1}$.
From Equation (\ref{AA}), 
\begin{eqnarray}
\tilde{A} = \tilde{P} \Lambda_{ij} \tilde{P}^{-1}= \hat{A} + \Delta_5
\end{eqnarray}
Using the eigenvalue perturbation theory, $\tilde{P} = P + \Delta_6$, where $\| \Delta_6 \| \ \varpropto o(\epsilon^2)$, $\| \Lambda_{ij} - \Lambda_S \| \varpropto o(\epsilon^2)$. where $\Lambda_S $ are the eigenvalues of the system matrix $A$.
 Now, we want to bound the errors between the right and left eigenvectors corresponding to the same eigenvalues.
\begin{eqnarray}
\Psi_{ij} = T_r\tilde{P}=XV_p \Sigma_p^{-1/2} (P + \Delta_6)\\
=(V_S \alpha_S+ V_D \delta \alpha_D)(\hat{V}_p \hat{\Sigma}_p^{-1/2} +\Delta_2)(P+\Delta_6).\nonumber\\
=(V_S \alpha_S + V_D \delta \alpha_D )( \hat{V}_p \hat{\Sigma}_p^{-1}\hat{U}_p' \beta_S'  +\Delta_7) \\
=V_S \underbrace{\alpha_S \hat{V}_p \hat{\Sigma}_p^{-1}\hat{U}_p' \beta_S'}_{P^{-1}P}+V_D \delta \alpha_D  \hat{V}_p \hat{\Sigma}_p^{-1}\hat{U}_p' \beta_S' + \Delta_8 \nonumber\\
=V_S + V_D \delta \alpha_D  \hat{V}_p \hat{\Sigma}_p^{-1}\hat{U}_p' \beta_S' +\Delta_8 \nonumber 
\end{eqnarray}
Here, $\| \Delta_7 \|, \| \Delta_8 \|\ \varpropto o(\epsilon^2)$. Since $\|V_D \delta \alpha_D  \hat{V}_p \hat{\Sigma}_p^{-1}\hat{U}_p' \beta_S' \| \varpropto o(\epsilon) $, then $\| \Psi_{ij}- V_S \| \varpropto o(\epsilon)$. Similarly, if we denote $\Phi_{ij}' = \tilde{P}^{-1} T_l$, then $\|\Phi_{ij}- U_S\| \varpropto o(\epsilon)$.

\bibliographystyle{IEEEtran}
\bibliography{IPOD_refs}

\end{document}